\DeclareMathOperator{\orb}{O}
\DeclareMathOperator{\nrk}{nrk}
\DeclareRobustCommand{\coin}{%
  \tikz[baseline=-0.5ex]{%
    \draw[color=blue] (0,0) circle (0.1cm);
    \draw[color=blue] (-0.1,0) -- (0.1,0);
    \draw[color=blue] (0,-0.1) -- (0,0.1);
  }%
}
\newcommand{\sdotsss}%
{\text{\raisebox{-2.2pt}{$\cdot\,$}%
 \raisebox{1.7pt}{$\cdot$}%
\raisebox{5.6pt}{$\,\cdot$}}}
\renewcommand{\le}{\leqslant}
\renewcommand{\ge}{\geqslant}
\newtheorem{theorem}{Theorem}[section]
\newtheorem{lemma}[theorem]{Lemma}
\newtheorem{remark}[theorem]{Remark}
\newtheorem{example}[theorem]{Example}
\newcommand{\hide}[1]{}
\def\ps@pprintTitle{%
   \let\@oddhead\@empty
   \let\@evenhead\@empty
   \let\@oddfoot\@empty
   \let\@evenfoot\@empty
}
\begin{document}

\title{Minimal degenerations of orbits of skew-symmetric matrix pencils}

\tnotetext[t1]{The work was supported by the Swedish Research Council (VR) under grant 2021-05393. The authors report there are no competing interests to declare.}

\date{}

\author[um]{Sweta Das}
\ead{sweta.das@oru.se}

\author[vn,um]{Andrii Dmytryshyn}
\ead{andrii@chalmers.se}

\address[um]{School of Science and Technology, Örebro University, 70182 Örebro, Sweden}
\address[vn]{Department of Mathematical Sciences, Chalmers University of Technology and University of Gothenburg, 41296 Gothenburg, Sweden}

\begin{abstract}
Complete eigenstructure, e.g., eigenvalues with multiplicities and minimal indices, of a skew-symmetric matrix pencil may change drastically if the matrix coefficients of the pencil are subjected to (even small) perturbations. These changes can be investigated qualitatively by constructing the stratification (closure hierarchy) graphs of the congruence orbits of the pencils. The results of this paper facilitate the construction of such graphs by providing all closest neighbours for a given node in the graph. More precisely, we prove a necessary and sufficient condition for one congruence orbit of a skew-symmetric matrix pencil, $A$, to belong to the closure of the congruence orbit of another pencil, $B$, such that there is no pencil, $C$, whose orbit contains the closure of the orbit of $A$ and is contained in the closure of the orbit of $B$.
\end{abstract}

\begin{keyword} matrix pencil\sep congruence\sep skew-symmetry\sep stratification\sep eigenstructure\sep canonical form

\MSC 15A22\sep 15A21\sep 15A18
\end{keyword}

\maketitle

\section{Introduction}
\label{introd} 
Complete eigenstructure, e.g., eigenvalues with multiplicities and minimal indices, of a matrix pencil is essential for understanding physical properties of the system described by this pencil. Unfortunately, small perturbations in the coefficients of the pencil may cause significant changes in the eigenstructure. This sensitivity challenges the accuracy of analysis and computations that are dependent on the eigenstructure of the input matrix pencil. One way to study these changes is by constructing the closure hierarchy graphs (or stratifications) for orbits and bundles of matrix pencils \cite{Bole98,DmJK17,DmKa14,FeGP00,Garc98,Pokr86}. 
Stratification provides us with qualitative information about the eigenstructure of the matrix pencils in a neighbourhood of a given matrix pencil. More precisely, stratification of orbits of matrix pencils is a graph in which each node represents an orbit and each edge represents a closure relation, i.e., there is an edge between two nodes if the closure of the orbit corresponding to one node contains the closure of the orbit corresponding to another node, see e.g., \cite{EdEK99,Stra24}.  

In many applications, the matrix coefficients of the pencils possess a certain structure, e.g., they may be (skew-)symmetric, (skew-)Hermitian, palindromic, or have a particular blocking structure. Preserving these structures in the analysis and computations may be crucial for applications \cite{MMMM06b}. Therefore, we aim to develop the stratification theory for the structured matrix pencils, which may differ significantly from the theory for general  matrix pencils, see e.g., \cite{DeDD20a,DeDD24}. In particular, this paper contributes to the development of such a theory  for skew-symmetric matrix pencils. Recall that $A - \lambda B$ is skew-symmetric if and only if $A = -A^T$ and $B = -B^T$.  
Skew-symmetric matrix pencils have various applications in mathematics, engineering and physics. Notable ones are in control theory \cite{BoMe06, KaPK, LiHo99}, systems with bi-hamiltonian structure \cite{Olve91} and multisymplectic partial differential equations \cite{BrRe01}.

Skew-symmetric matrix pencils have received a considerable attention in the recent years, see e.g., papers on canonical forms \cite{Rodm07, Thom91}, structured staircase algorithm \cite{BrMe07, ByMX07}, computation of codimensions of the congruence orbits and bundles \cite{DmJK13,DmKS13}, miniversal deformations and perturbation theories \cite{Dmyt16,DoPV19}, stratification theory \cite{DmKa14}, and problems for skew-symmetric linearizations of skew-symmetric matrix polynomials \cite{DeDD24a, Dmyt17, DmDo18}. 
Moreover, in \cite{DmKa14} an algorithm for stratification of skew-symmetric matrix pencils is presented. Nevertheless, it is computationally demanding even for the pencils of small sizes and the amount of required computations grows rapidly when the size of pencils increases. Such a high computational cost arises partially from the use of the stratification of general matrix pencils under strict equivalence from which the algorithm extracts the nodes corresponding to the skew-symmetrizable matrix pencils and checks for the paths between these nodes. Using this extracted information a new stratification graph for skew-symmetric matrix pencils under congruence is then constructed. For example, to draw the orbit stratification graph of $7 \times 7$ skew-symmetric matrix pencil under congruence (presented in Figure \ref{fig:Orbit_Ex_7}), one needs to consider the stratification graph of a $7 \times 7$ general matrix pencils, consisting of 570 nodes and 2471 edges (these exact numbers are taken from the Stratigraph software \cite{Stra24}), identify and extract the 20 relevant nodes and check the paths between these nodes in the graph for general matrix pencils. 
%
%
Using the results of this paper, one can draw the stratification graph of skew-symmetric matrix pencils without a need to consider the stratification graph for general matrix pencils, see Examples \ref{orbit_ex} and \ref{example2}. 

This paper closes the gap in the stratification theory of orbits of skew-symmetric matrix pencils. Namely, it explains when two orbits are neighbours in the stratification graph, which allows us to construct such graphs efficiently. We also emphasize that as with the general matrix pencils, this result corresponds to the description of the minimal changes that may occur to the eigenstructure of a skew-symmetric matrix pencil under arbitrarily small skew-symmetric perturbation. The corresponding results are already known for general matrix pencils \cite{EdEK99} and matrix polynomials \cite{DJKV20}, as well as for controllablity and observability pairs \cite{ElJK09}, and system pencils \cite{DmJK17}.   




In Section 2, we recall definitions, notations, and results related to the orbit stratification of matrix pencils along with some observations about skew-symmetric matrix pencils. In Section 3, we present our main results about necessary and sufficient conditions for two orbits of skew-symmetric matrix pencils to be neighbours in the closure hierarchy graphs and illustrate our results with examples. 
Section 4 presents a relation between closure hierarchy graphs of matrix pencils of different dimensions. Conclusions and future plans are presented in Section 5. 

\section{Preliminary results}
\label{prel}

Define $\mathbb{Q}_+$, $\mathbb{Z}_+$, and $\mathbb{C}$ as the sets of positive rational numbers, positive integers and complex numbers, respectively.  All matrices considered in this manuscript are complex matrices. For each $k=1,2, \ldots $ define the $k\times k$
matrices
\begin{equation*}\label{1aa}
J_k(\mu):=\begin{bmatrix}
\mu&1&&\\
&\mu&\ddots&\\
&&\ddots&1\\
&&&\mu
\end{bmatrix},\qquad
I_k:=\begin{bmatrix}
1&&&\\
&1&&\\
&&\ddots&\\
&&&1
\end{bmatrix},
\end{equation*}
and for each $k=0,1, \ldots $ define the $k\times
(k+1)$ matrices
\begin{equation*}
F_k :=
\begin{bmatrix}
0&1&&\\
&\ddots&\ddots&\\
&&0&1\\
\end{bmatrix}, \qquad
G_k :=
\begin{bmatrix}
1&0&&\\
&\ddots&\ddots&\\
&&1&0\\
\end{bmatrix}.
\end{equation*}
All non-specified entries of $J_k(\mu), I_k, F_k,$ and $G_k$ are zeros.

\subsection{Matrix pencils under strict equivalence}

An $m \times n$ matrix pencil $A - \lambda B$ is called strictly equivalent to the pencil $C - \lambda D$ if and only if there are non-singular matrices $Q$ and $R$ such that $(Q^{-1}AR,Q^{-1}BR) = (C,D)$. The {\it orbit} of $A - \lambda B$ under the
action of the group $GL_m(\mathbb C) \times GL_n(\mathbb C)$ on the space of all matrix pencils by strict equivalence is defined as follows:
\begin{equation*} \label{equorbit}
\orb_{A - \lambda B}^e = \{Q^{-1} (A - \lambda B) R \ | \ Q \in GL_m(\mathbb C), R \in GL_n(\mathbb C)\}.
\end{equation*} 

The following theorem gives a canonical form known as the Kronecker canonical form (KCF) for matrix pencils under the strict equivalence transformation.

\begin{theorem}{ \cite{Gant59}}\label{kron}
Each $m \times n$ matrix pencil $A - \lambda B$ is strictly equivalent 
to a direct sum, uniquely determined up
to permutation of summands, of pencils of the form
\begin{align*}
 E_k(\mu)&:=J_k(\mu)- \lambda I_k, \mu \in \mathbb C, \quad E_k(\infty):=I_k- \lambda J_k(0), \\
 L_k&:=F_k- \lambda G_k, \quad \text{ and } \quad L_k^T:=F^T_k - \lambda G_k^T. 
\end{align*}
\end{theorem}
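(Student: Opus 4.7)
The plan is to establish Theorem \ref{kron} by separately handling the \emph{singular part} and the \emph{regular part} of the pencil, and then deducing uniqueness from numerical invariants. The first step is to introduce the notion of a right minimal index: consider polynomial vectors $x(\lambda)\in\mathbb{C}[\lambda]^n$ with $(A-\lambda B)x(\lambda)=0$, and let $\varepsilon_1$ be the smallest possible degree of a nonzero such vector. If no such vector exists, the pencil has trivial right null space and we move directly to the regular analysis. Otherwise, I would show that the columns of a minimal right null polynomial together with suitable companion columns can be completed to a strict-equivalence transformation that exhibits an $L_{\varepsilon_1}$ summand. The analogous argument on the left (polynomial vectors annihilating $A-\lambda B$ from the left) peels off an $L_k^T$ block.

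The main technical obstacle is exactly this peeling: I need to argue that after splitting off an $L_{\varepsilon_1}$ block, the remaining pencil has no right null vector of degree smaller than $\varepsilon_1$, so iteration yields a nondecreasing sequence $\varepsilon_1\le\varepsilon_2\le\cdots$ of right minimal indices and a direct sum of $L_k$'s. The standard way to overcome it is a minimality argument: any right null polynomial of the residual pencil lifts to one of the original pencil of no larger degree, and if it were shorter it would contradict the choice of $\varepsilon_1$. The same reasoning, applied to $(A-\lambda B)^T$, strips off all $L_k^T$ summands.

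After removing all singular blocks, the remaining pencil $\tilde A-\lambda\tilde B$ is square and regular, i.e.\ $\det(\tilde A-\lambda\tilde B)\not\equiv 0$. For this part I would invoke Weierstrass' theorem: pick $\mu_0\in\mathbb{C}$ with $\det(\tilde A-\mu_0\tilde B)\neq 0$, shift $\lambda\mapsto \lambda+\mu_0$ so that the coefficient of $\lambda^0$ becomes invertible, and then reduce $\tilde A-\lambda\tilde B$ by a left multiplication to the form $I-\lambda M$. The Jordan normal form of $M$ gives the finite blocks $E_k(\mu)$ (for nonzero eigenvalues of $M$) and the infinite blocks $E_k(\infty)$ (for the nilpotent part of $M$, after reversing the roles of the two coefficients).

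For uniqueness I would appeal to invariants under strict equivalence. The finite Jordan structure is read off from the Smith form of $\tilde A-\lambda\tilde B$ at each $\mu\in\mathbb{C}$, equivalently from the sequence $\rank(A-\mu B)^{(j)}$ after removing the singular parts. The multiplicities of $E_k(\infty)$ follow by the same analysis applied to the reverse pencil. Finally, the sequences of right and left minimal indices are intrinsic, since they are defined through the minimal degrees of polynomial solutions to $(A-\lambda B)x(\lambda)=0$ and $y(\lambda)^T(A-\lambda B)=0$, and these degrees are invariant under multiplication by constant invertible matrices on either side. Combining these invariants forces two KCFs of the same pencil to coincide up to permutation of summands, completing the proof.
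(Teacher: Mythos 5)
This theorem is stated in the paper as a citation to Gantmacher \cite{Gant59}; the paper supplies no proof of its own, so there is no in-paper argument to compare against. Your sketch follows the classical route of the cited reference: peel off singular blocks via right and left minimal indices, reduce the remaining regular part by Weierstrass/Jordan theory into $E_k(\mu)$ and $E_k(\infty)$ blocks, and deduce uniqueness from strict-equivalence invariants (Smith form at each $\mu$, the reversal pencil for $\infty$, and minimal indices). The overall strategy is sound, and you correctly flag the genuinely technical step — constructing the explicit strict-equivalence transformation that splits off an $L_{\varepsilon_1}$ summand, and showing the residual pencil's right minimal indices are exactly the remaining $\varepsilon_2\le\varepsilon_3\le\cdots$ — which is the part that occupies most of Gantmacher's exposition. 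One small caution: the one-block-at-a-time peeling you describe needs the lemma that the residual's right minimal indices cannot drop below $\varepsilon_1$, which your lifting argument gives, but in Gantmacher the cleaner route is to construct an entire minimal polynomial basis for the right null space at once and split off all $L$-blocks simultaneously; either version works but the simultaneous construction avoids having to re-verify minimality after each strip. With those details filled in, the sketch is a faithful outline of the standard proof.
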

The blocks $E_k(\mu)$, $E_k(\infty)$, $L_k$ and $L_k^T$  correspond to the finite eigenvalues, infinite eigenvalues, column minimal indices and row minimal indices, respectively, of $A - \lambda B$. The blocks $E_k(\mu)$, $E_k(\infty)$ altogether form the regular part and the rest forms the singular part of the matrix pencil. 

An integer partition of $n \in \mathbb{Z}_+$ is a non-increasing sequence of integers ${\cal P}=(p_1, p_2, p_3, \ldots)$ such that $p_i \ge 0$ and the sum of all $p_i$ is $n$. Each of $p_i$'s is called {\it part} of the integer partition. For any $a \in \mathbb Z$ and $b \in \mathbb{Q}_+$, ${\cal P}+a$ and $b{\cal P}$ are defined as addition of $a$ to each part of the partition ${\cal P}$ and multiplication by $b$ of each part of the partition ${\cal P}$, respectively, such that the resulting sequence is a non-increasing sequence of non-negative integers. The set of all integer partitions forms a lattice with respect to the dominance order. By dominance order on integer partitions, we mean that for any integer partitions ${\cal P}$ and ${\cal Q}$,  ${\cal P} \succcurlyeq {\cal Q}$ if and only if $p_{1} + p_{2} + \ldots + p_{m} \ge q_{1} + q_{2} + \ldots + q_{m}$, for $m = 1,2,$ \ldots, and ${\cal P} \succ {\cal Q}$ if and only if ${\cal P} \succcurlyeq {\cal Q}$ and ${\cal P} \neq {\cal Q}$. In a lattice, we say ${\cal P}$ covers ${\cal Q}$ if and only if ${\cal P} \succ {\cal Q}$ and there exists no partition $\mathcal{Z}$ that $\mathcal{Z} \succ {\cal Q}$ and  ${\cal P} \succ \mathcal{Z}$. The cover relation between two integer partitions can also be illustrated in the following way: \\
Place $m$ coins in a table in a non-increasing sequence with $m_{i}$ coins in column $i$. An integer partition ${\cal P}_{1}$ covers ${\cal P}_{2}$ if ${\cal P}_{2}$ may be obtained from ${\cal P}_{1}$ by moving a coin downward one row or rightward one column such that the resulting integer partition remains monotonic. Such coin move will be referred to as {\it{minimum rightward coin move}}. Equivalently, {\it{minimum leftward coin move}} is referred to a coin move upward one row or leftward one column to obtain ${\cal P}_{1}$ from ${\cal P}_{2}$ (for more information, see e.g., \cite{EdEK99}).

We define $\overline{\mathbb{C}} = \mathbb{C} \cup \infty$. The following integer partitions, known as the Weyr characteristics, are associated with every matrix pencil $P$ (with eigenvalues $\mu_j \in \overline{\mathbb C}$):
\begin{itemize}
    \item ${\cal R}(P)=(r_0(P),r_1(P), \dots )$: the $k^{th}$ position is the number of $L$-blocks with indices greater than or equal to $k$ (the position numeration starting from 0);
    \item ${\cal L}(P)=(l_0(P),l_1(P), \dots )$: the $k^{th}$ position is the number of $L^T$-blocks with indices greater than or equal to $k$ (the position numeration starting from 0);
    \item $\{ {\cal J}_{\mu_j}(P): j=1, \dots, d \},$ where $d$ is the number of distinct eigenvalues: for each distinct $\mu_j$, ${\cal J}_{\mu_j}(P)=(j_1^{\mu_j}(P),j_2^{\mu_j}(P), \dots),$  the $k^{th}$  position is the number of Jordan blocks of $P$ of size greater than or equal to $k$ (the position numeration starting from 1).
\end{itemize}

For an $m\times n$ matrix pencil $P$, the normal rank of $P$ is defined as the rank of the matrix $P$ over $\mathbb{C}(\lambda)$. It can also be formulated as $nrk(P) = n - r_0(P) = m - l_0(P)$, see \cite{EdEK99}.

By $\overline{ \cal X }$ we denote the closure of a set ${\cal X}$ in the Euclidean topology. 
For two matrix pencils X and Y, by the structure transition $X \rightsquigarrow Y$ we mean that in the canonical form of a matrix pencil the blocks represented by $X$ are replaced by the blocks $Y$. Note that $X$ and $Y$ must have the same sizes.
The following theorem helps us to understand all possible changes in the KCF of matrix pencils under small perturbations.
\begin{theorem}\cite{Bole98, DmKa14, Pokr86}\label{equi-struc-trans}
Let $P_1$ and $P_2$ be two matrix pencils such that $\orb_{P_2}^e \subset \overline {\orb_{P_1}^e}$.
Then $P_2$ can be obtained from $P_1$ changing canonical blocks of $P_1$ by applying a sequence of structure transitions and each transition is one of the six types below:
\begin{enumerate}
    \item[G1.] $L_{j} \oplus L_{k} \rightsquigarrow L_{j-1} \oplus L_{k+1}$, $1\le j \le k;$
    \item[G2.] $L_{j}^T \oplus L_{k}^T \rightsquigarrow L_{j-1}^T \oplus L_{k+1}^T$, $1\le j \le k;$
    \item[G3.] $L_{j+1} \oplus E_{k}(\mu) \rightsquigarrow L_{j} \oplus E_{k+1}(\mu)$, $j,k=0,1,2, \dots$ and $\mu \in \overline{\mathbb C};$ 
    \item[G4.] $L_{j+1}^T \oplus E_{k}(\mu) \rightsquigarrow L_{j}^T \oplus E_{k+1}(\mu)$, $j,k=0,1,2, \dots$ and $\mu \in \overline{\mathbb C};$ 
    \item[G5.] $E_{j-1}(\mu) \oplus E_{k+1}(\mu) \rightsquigarrow E_{j}(\mu) \oplus E_{k}(\mu)$, $1\le j \le k$ and $\mu \in \overline{\mathbb C};$
    \item[G6.] $\bigoplus_{i=1}^tE_{k_i}(\mu_i) \rightsquigarrow L_{p} \oplus L_{q}^T$, if $p+q+1= \sum_{i=1}^t k_i$ and $\mu_i \neq \mu_{i'}$ for $i \neq i', \mu_i \in \overline{\mathbb C}.$ 
\end{enumerate}
\end{theorem}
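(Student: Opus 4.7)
The plan is to reduce the claim to a statement about integer partitions via Weyr characteristics, and then verify that each cover in the induced partial order is realized by exactly one of the six listed transitions. First, I would invoke the classical characterization (see e.g.\ \cite{Bole98, Pokr86}) that $\orb_{P_2}^e \subset \overline{\orb_{P_1}^e}$ is equivalent to a prescribed system of dominance-type inequalities on the Weyr characteristics ${\cal R}$, ${\cal L}$, and $\{{\cal J}_\mu\}_{\mu \in \overline{\mathbb C}}$ of $P_1$ and $P_2$, together with the dimension/normal-rank bookkeeping $\nrk P_2 \le \nrk P_1$. Since orbit closure is transitive, it suffices to prove the statement when $(P_1,P_2)$ is a covering pair in this combined partial order, so the problem becomes a combinatorial classification of covers.

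Second, using the coin-move interpretation of dominance recalled in the preliminaries, I would enumerate the covers according to which Weyr partition the move happens in, or across which pair of partitions mass is transferred. A minimum rightward coin move inside ${\cal R}(P_1)$ corresponds to G1 (two $L$-blocks adjust their indices while their total number is preserved), and symmetrically inside ${\cal L}(P_1)$ gives G2. A coin that leaves ${\cal R}$ and lands in some ${\cal J}_\mu$ yields G3 (an $L$-block shrinks by one while a Jordan block at $\mu$ grows by one), with the analogous transfer from ${\cal L}$ to ${\cal J}_\mu$ producing G4. A coin move inside a single ${\cal J}_\mu$ yields G5. In each of these cases the dimensional bookkeeping, the non-increasing property of the partitions, and normal-rank preservation are direct checks.

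The main obstacle, and the step demanding most care, is G6. This is the only transition that strictly decreases the normal rank (by one), so it cannot correspond to a single coin move inside one Weyr partition; instead it is a coordinated change that simultaneously eliminates several Jordan blocks at \emph{pairwise distinct} eigenvalues and creates one $L$-block together with one $L^T$-block. To justify it, I would argue that any covering relation dropping the normal rank by one must, in its minimal realization, coalesce a regular part supported on pairwise distinct eigenvalues $\mu_1,\ldots,\mu_t$ into a singular pair $L_p \oplus L_q^T$ of the same total dimension, forcing $p+q+1 = \sum_{i=1}^t k_i$. Distinctness of the $\mu_i$ is forced by minimality: if two coincided, one could first apply a G5 move inside the corresponding ${\cal J}_\mu$ and obtain a strictly intermediate orbit, contradicting the cover property. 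Finally, I would verify the converse direction, namely that each of G1--G6 actually produces an orbit in the closure and that no intermediate orbit interpolates, by exhibiting explicit one-parameter deformations realizing each degeneration and comparing Weyr characteristics of the limit with those of nearby pencils.
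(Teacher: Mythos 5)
This statement is one of the cited preliminaries (Pokrzywa, Boley, Dmytryshyn--K{\aa}gstr\"om): the paper gives no proof of it, so there is nothing internal to compare your argument against; I can only assess the proposal on its own terms. Your outline follows the standard route in the cited literature --- reduce orbit closure to Pokrzywa-type majorization conditions on the Weyr characteristics, then realize the resulting combinatorial degenerations by block transitions --- so the strategy is the right one.

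There are, however, two places where the real content is asserted rather than argued. First, the reduction ``by transitivity it suffices to treat covering pairs'' needs justification: there are infinitely many orbits (eigenvalues are continuous parameters, and transitions of type G3/G4 with $k=0$ create new eigenvalues in the closure), so the decomposition of an arbitrary closure relation into a finite chain of covers must be grounded in something like the strict increase and boundedness of orbit codimension along proper closure relations. Second, and more seriously, the step ``enumerate the covers according to which Weyr partition the move happens in'' presupposes that a cover in the coupled partial order is always a single coin move confined to one partition or a single transfer between two partitions. But Pokrzywa's conditions are not independent dominance orders on ${\cal R}$, ${\cal L}$, and the ${\cal J}_\mu$ taken separately; they are coupled through rank inequalities, and ruling out a cover that simultaneously perturbs several partitions (or, in the G6 case, drops the normal rank by more than one) is precisely the substance of the Edelman--Elmroth--K{\aa}gstr\"om classification that the paper records as Theorem~\ref{coin-equi}. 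Your treatment of G6 has the right flavor (distinctness of the $\mu_i$ forced by interposing a G5 move), but the claim that any normal-rank-decreasing cover drops the rank by exactly one and coalesces exactly one maximal Jordan block per eigenvalue is not derived. Note also that the theorem as stated only asserts the forward (necessity) direction and does not require the individual transitions to be covers, so a direct construction of a transition sequence from the majorization inequalities (as in Pokrzywa's and Boley's original arguments) would avoid the cover classification altogether and is arguably the shorter path to this particular statement.
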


For two matrix pencils $P_1$ and $P_2$, $P_1 \neq P_2$, $\orb_{P_1}^e \text{ covers } \orb_{P_2}^e$ means $\orb_{P_2}^e \subset \overline {\orb_{P_1}^e}$ and there exist no other skew-symmetric matrix pencil, $P$, $P_1 \neq P$ and $P \neq P_2$ such that $\orb_{P}^e \subset \overline {\orb_{P_1}^e}$ and $\orb_{P_2}^e \subset \overline {\orb_{P}^e}$. Theorem \ref{coin-equi} is a necessary and sufficient condition for orbit cover relation of matrix pencils in terms of coin moves.  
\begin{theorem}\cite{EdEK99}\label{coin-equi}
Let $P_1$ and $P_2$ be two matrix pencils. Then $\orb_{P_1}^e$ covers $\orb_{P_2}^e$ if and only if $P_2$ can be obtained from $P_1$ by applying one of four rules to the integer partition of $P_1$:
\begin{itemize}  
\item[C1.] Minimum rightward coin move from column $j$ in ${\cal R}$ (or ${\cal L}$), where $j\ge 1$;
\item[C2.] If the rightmost column in ${\cal R}$ (or ${\cal L}$) is single coin, append that coin as a new rightmost column of some ${\cal J}_{\mu_i}$;
\item[C3.] Minimum leftward coin move in any ${\cal J}_{\mu_i}$;
\item[C4.] Let k denote the total number of coins in all of the longest (=lowest) rows from all of the ${\cal J}_{\mu_i}$. If the total number of non-zero columns of ${\cal R}$ and ${\cal L}$ exceeds  k+1, remove these $k$ coins, add one more coin to the set, and distribute k+1 coins to $r_p$, $p = 0,\ldots,t$ and $l_q$, $q = 0,\ldots,k-t-1$ such that at least all non-zero columns of ${\cal R}$ and ${\cal L}$ are given coins.
\end{itemize}
\end{theorem}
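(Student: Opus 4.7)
The plan is to use Theorem \ref{equi-struc-trans} as the engine, since it already classifies all the elementary moves that can produce a closure relation $\orb_{P_2}^e \subset \overline{\orb_{P_1}^e}$. The question then reduces to: which sequences of transitions G1--G6, applied to the canonical form of $P_1$, correspond to \emph{covering} (no intermediate orbit), and how do these translate into the combinatorics of the Weyr characteristics $({\cal R},{\cal L},\{{\cal J}_{\mu_j}\})$.

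First I would translate each transition into an operation on the characteristics. Transitions G1 and G2 act only within ${\cal R}$ or ${\cal L}$, shifting one coin in the conjugate partition; when $j$ and $k$ are as close as possible, this is already a minimum rightward move (rule C1). Transitions G3 and G4 transfer one coin from the rightmost column of ${\cal R}$ or ${\cal L}$ to some ${\cal J}_{\mu_j}$ whenever the involved $L$ (or $L^T$) block is shortest possible, which is precisely rule C2. Transition G5 is an internal rearrangement of one ${\cal J}_{\mu_j}$ and, when $j,k$ are adjacent, corresponds to a minimum leftward coin move (rule C3). Finally, G6 collapses a disjoint union of regular blocks into one singular pair $L_p \oplus L_q^T$, which is the non-local rule C4.

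Next, for sufficiency I would argue that every application of one of C1--C4 is realized by a corresponding transition of Theorem \ref{equi-struc-trans}, hence gives $\orb_{P_2}^e \subset \overline{\orb_{P_1}^e}$, and then check that no intermediate orbit exists by showing directly that any pencil $P$ with $\orb_{P_2}^e \subset \overline{\orb_P^e} \subset \overline{\orb_{P_1}^e}$ would produce an integer partition strictly between those of $P_1$ and $P_2$ in the dominance order, contradicting the minimality of the chosen coin move. For necessity, given a cover relation, I would appeal to Theorem \ref{equi-struc-trans} to obtain a sequence of $G$-transitions mapping $P_1$ to $P_2$, then argue by case analysis that a non-minimum coin move always decomposes through an intermediate pencil: if a rightward move in ${\cal R}$ skips columns, one can halt it halfway; if a G3/G4 step uses a non-minimal $L$-block, one can first shrink the block by a G1 step; and so on.

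The main obstacle is rule C4, which corresponds to transition G6. Unlike the other moves, G6 is genuinely global: it removes several coins at once from the Jordan partitions and redistributes $k+1$ coins across ${\cal R}$ and ${\cal L}$ under several combinatorial constraints (all previously nonzero columns of ${\cal R}$ and ${\cal L}$ must receive coins, and the number $k$ is determined by the longest rows of the ${\cal J}_{\mu_j}$). Proving that this composite transition is \emph{indecomposable}---i.e., cannot be split into two successive orbit degenerations---requires a delicate analysis showing that any more conservative regular-to-singular collapse either fails to terminate at a legitimate canonical form or produces a pencil whose orbit lies strictly between $\orb_{P_1}^e$ and $\orb_{P_2}^e$.
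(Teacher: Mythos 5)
The paper does not prove this statement: Theorem~\ref{coin-equi} is quoted verbatim from \cite{EdEK99} and used as a black box, so there is no in-paper proof to compare yours against. The closest material is the proof of Theorem~\ref{struc-equi}, which goes in the \emph{opposite} direction to your proposal --- it takes Theorem~\ref{coin-equi} as given and derives the structure-transition formulation from it. Your outline (use Theorem~\ref{equi-struc-trans} to characterize closure, then isolate the covers by minimality of the coin move) is the standard architecture of the original Edelman--Elmroth--K{\aa}gstr{\"o}m argument, so the overall route is sound.

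That said, as a proof it has concrete gaps. First, a local error: for G3/G4 to realize rule C2 the $L$-block (resp.\ $L^T$-block) consumed must be the \emph{largest} one, occurring with multiplicity one at that size --- the removed coin sits in the rightmost column of ${\cal R}$, i.e.\ the column indexed by the largest minimal index --- not the ``shortest possible'' block as you write; compare the conditions $r_{j+1}(P_1)=1$ and ``$j+1$ is the size of the largest $L$-block'' in rule J3 of Theorem~\ref{struc-equi}. Second, the key step ``any intermediate orbit would produce an integer partition strictly between those of $P_1$ and $P_2$ in the dominance order'' is asserted, not argued: the closure order on orbits is a \emph{conjunction} of dominance conditions on ${\cal R}$, ${\cal L}$, and all the ${\cal J}_{\mu}$ together with normal-rank constraints, so a cover in the orbit poset is not automatically a cover in any single partition lattice, and an intermediate pencil could in principle trade a strict improvement in one characteristic against another. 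Bridging that product-order subtlety is exactly where the real work lies. Third, you correctly identify C4/G6 as the hard, non-local case but then only state that its indecomposability ``requires a delicate analysis'' --- that analysis (showing no two-step factorization through a legitimate Kronecker structure exists, given the constraints on which columns must receive coins and on the count $k$) is the substance of the theorem and is missing from the proposal. As it stands this is a plausible roadmap to the known proof rather than a proof.
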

For convenience, we reformulate Theorem \ref{coin-equi} in terms of structure transitions.
\begin{theorem}\label{struc-equi}
    Let $P_1$ and $P_2$ be two matrix pencils. Then $\orb_{P_1}^e$ covers $\orb_{P_2}^e$ if and only if $P_2$ can be obtained from $P_1$ by applying one of six structure transitions to the canonical blocks of $P_1$:
    \begin{itemize}
        \item[J1.] $L_{j} \oplus L_{k} \rightsquigarrow L_{j-1} \oplus L_{k+1}$, $1\le j \le k$ such that $k - j = \min\big\{\{k_1-j,k_2-j,\dots\}\cup \{k-j_1,k-j_2,\dots\}\big\}$, $j_t\le k$ and $j \le k_u $;
        \item[J2.] $L_{j}^T \oplus L_{k}^T \rightsquigarrow L_{j-1}^T \oplus L_{k+1}^T$, $1\le j \le k$ such that $k - j = \min\big\{\{k_1-j,k_2-j,\dots\}\cup \{k-j_1,k-j_2,\dots\}\big\}$, $j_t\le k$ and $j \le k_u $;
        \item[J3.] $L_{j+1} \oplus E_{k}(\mu) \rightsquigarrow L_{j} \oplus E_{k+1}(\mu)$, $j,k=0,1,2, \dots$ and $\mu \in \overline{\mathbb C}$ such that $k$ and $j+1$ are the sizes of the largest $E$-block and $L$-block in $P_1$, respectively, provided that $r_{j+1}(P_1) = 1$;
        \item[J4.] $L_{j+1}^T \oplus E_{k}(\mu) \rightsquigarrow L_{j}^T \oplus E_{k+1}(\mu)$, $j,k=0,1,2, \dots$ and $\mu \in \overline{\mathbb C}$  such that $k$ and $j+1$ are the sizes of the largest $E$-block and $L^T$-block in $P_1$, respectively, provided that $r_{j+1}(P_1) = 1$;
        \item[J5.] $E_{j-1}(\mu) \oplus E_{k+1}(\mu) \rightsquigarrow E_{j}(\mu) \oplus E_{k}(\mu)$, $\mu \in \overline{\mathbb C}$, $1\le j \le k$ such that $k - j = \min\big\{\{k_1-j,k_2-j,\dots\}\cup \{k-j_1,k-j_2,\dots\}\big\}$, $j_t\le k$ and $j \le k_u $;
        \item[J6.] $\bigoplus_{i=1}^tE_{k_i}(\mu_i) \rightsquigarrow L_{p} \oplus L_{q}^T$, $p\ge p_l$, $q \ge q_j$, for all existing $L_{p_l}$ and $L_{q_j}^T$ in $P_1$; $p+q+1 = \sum_{i=1}^d k_i$, $k_i$ is the size of the largest Jordan block in $P_1$ corresponding to each distinct $\mu_i \in \overline{\mathbb C}.$
    \end{itemize}
\end{theorem}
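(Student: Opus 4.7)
The plan is to show that Theorem \ref{struc-equi} is a direct reformulation of Theorem \ref{coin-equi} in the language of structure transitions, by establishing a bijective correspondence between the four coin-move rules C1--C4 and the six structure transitions J1--J6. Since Theorem \ref{coin-equi} already characterizes the cover relation ``$\orb_{P_1}^e$ covers $\orb_{P_2}^e$'' combinatorially on the Weyr partitions $({\cal R}, {\cal L}, \{{\cal J}_{\mu_j}\})$, it suffices to verify that each J rule encodes exactly one admissible instance of the corresponding C rule, and conversely.

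First I would record the block--coin dictionary. For ${\cal R}(P) = (r_0, r_1, \ldots)$ the entry $r_k$ equals the number of $L$-blocks of index at least $k$, so the transition $L_j \oplus L_k \rightsquigarrow L_{j-1} \oplus L_{k+1}$ decreases $r_j$ by one and increases $r_{k+1}$ by one; this is exactly a rightward coin move from column $j$ to column $k+1$ in the coin diagram of ${\cal R}$. An analogous dictionary holds for ${\cal L}$ and the $L^T$-blocks, and for each ${\cal J}_\mu$ with Jordan-block enlargement and shrinkage corresponding to leftward and rightward coin moves, respectively. With this dictionary, C1 applied to ${\cal R}$ $\leftrightarrow$ J1, C1 applied to ${\cal L}$ $\leftrightarrow$ J2, and C3 $\leftrightarrow$ J5 become immediate once one checks that the phrase \emph{minimum rightward (or leftward) coin move} translates to the explicit minimality formula $k - j = \min\bigl\{\{k_1 - j, k_2 - j, \dots\} \cup \{k - j_1, k - j_2, \dots\}\bigr\}$ expressed via the block indices of $P_1$.

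For C2 $\leftrightarrow$ J3 (and similarly J4), the single-coin condition at the rightmost column of ${\cal R}$ translates to the existence of a unique $L$-block of maximum size $j+1$, i.e.\ $r_{j+1}(P_1) = 1$. Appending the coin to the rightmost column of some ${\cal J}_\mu$ enlarges the largest Jordan block at $\mu$ from size $k$ to $k+1$, producing the transition $L_{j+1} \oplus E_k(\mu) \rightsquigarrow L_j \oplus E_{k+1}(\mu)$. For C4 $\leftrightarrow$ J6, the coins removed from the lowest rows of all ${\cal J}_{\mu_j}$ correspond to the largest Jordan block of each distinct eigenvalue, i.e.\ $\bigoplus_i E_{k_i}(\mu_i)$; redistributing $k+1$ coins to $r_p$ and $l_q$ while preserving all existing non-zero columns of ${\cal R}$ and ${\cal L}$ is equivalent to producing $L_p \oplus L_q^T$ with $p \ge p_l$ and $q \ge q_j$ for every existing $L_{p_l}$ and $L_{q_j}^T$ in $P_1$, and the coin-counting identity $p + q + 1 = \sum_i k_i$ is exactly the size-accounting condition stated in J6.

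The main obstacle is not any deep structural step, since Theorem \ref{coin-equi} has already done the hard combinatorial work; it is rather the careful bookkeeping required to confirm that the word ``minimum'' in C1 and C3 corresponds precisely to the explicit min-formula appearing in J1, J2, and J5 when several blocks of the same type coexist, and to check that the weak inequalities $p \ge p_l$, $q \ge q_j$ in J6 faithfully capture the redistribution legality condition in C4. Once these verifications are carried out, the theorem follows by applying the appropriate C rule to a given $P_1$ and reading off the corresponding J rule, and conversely.
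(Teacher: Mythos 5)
Your proposal is correct, and the block--coin dictionary you set up is the right one: the identifications C1 on ${\cal R}\leftrightarrow$ J1, C1 on ${\cal L}\leftrightarrow$ J2, C2 $\leftrightarrow$ J3/J4, C3 $\leftrightarrow$ J5, C4 $\leftrightarrow$ J6, together with the accounting $p+q+1=\sum_i k_i$, the single-coin condition $r_{j+1}(P_1)=1$, and the legality conditions $p\ge p_l$, $q\ge q_j$, all match what the paper establishes. The difference lies in how the minimality clauses of J1, J2, J5 are justified. The paper does not route those three rules through the coin moves at all: it starts from the closure characterization (Theorem \ref{equi-struc-trans}, rules G1--G6) and argues that if $k-j$ is not minimal, the transition $L_j\oplus L_k\rightsquigarrow L_{j-1}\oplus L_{k+1}$ factors as a composition of two G1-transitions through an intermediate pencil, so the orbits are not neighbours; conversely a failure of the cover relation would force two applications of J1, contradicting minimality. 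Only for J3, J4 and J6 does the paper appeal to C2 and C4 as you do throughout. Your route is more uniform (everything is read off from Theorem \ref{coin-equi}), but it puts the weight on the dominance-order bookkeeping you correctly flag as the main obstacle, namely verifying that ``minimum rightward (leftward) coin move'' in the conjugate Weyr picture is exactly the condition $k-j=\min\bigl\{\{k_1-j,\dots\}\cup\{k-j_1,\dots\}\bigr\}$ on block indices; the paper's composition argument sidesteps that verification. Either way the remaining work is routine, and your plan would yield a complete proof once that translation is written out.
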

\begin{proof}
    We explain below the extra requirements added to the rules in Theorem \ref{equi-struc-trans} that results in {\it Rules J1-J6} from Theorem \ref{struc-equi}.

    {\it Rule J1}: Assume $k - j \neq \min\big\{\{k_1-j,k_2-j,\dots\}\cup \{k-j_1,k-j_2,\dots\}\big\}$, $j_t\le k$ and $j \le k_u $. Then either of the following two situations occur:
    \begin{itemize}
        \item There exists an $l$ such that $k - l = \min\big\{\{k_1-l,k_2-l,\dots\}\cup \{k-l_1,k-l_2,\dots\}\big\}$, $l_t\le k$ and $l \le k_u $. We apply $L_{l} \oplus L_{k} \rightsquigarrow L_{l-1} \oplus L_{k+1}$. Then, applying G1 from Theorem \ref{equi-struc-trans}, $L_{j} \oplus L_{l-1} \rightsquigarrow L_{j-1} \oplus L_{l}$ violates the cover relation.
        \item There exists an $l$ such that $l-j = \min\big\{\{l_1-j,l_2-j,\dots\}\cup \{l-j_1,l-j_2,\dots\}\big\}$, $j_t\le l$ and $j \le l_u $. Consequently, we apply, $L_j \oplus L_l \rightsquigarrow L_{j-1} \oplus L_{l+1}$ followed by G1 from Theorem \ref{equi-struc-trans} on the blocks $L_{l+1}$ and $L_k$, i.e., $L_{l+1} \oplus L_k \rightsquigarrow L_l \oplus L_{k+1}$. This violates the cover relation.
    \end{itemize}
    Conversely, if $\orb_{P_1}^e$ does not cover $\orb_{P_2}^e$, then there exist a matrix pencil $P$´ such that $\orb_{P_1}^e$ covers $\orb_{P{\text{´}}}^e$ and $\orb_{P{\text{´}}}^e$ covers $\orb_{P_2}^e$. Then we use $J1$ twice to achieve $P_2$ from $P_1$ which violates the minimality of the difference $k-j$. Similar argument holds for {\it Rules J2} and {\it Rules J5}.

    {\it Rule J3}: The requirements of $r_{j+1}(P_1) = 1$ and $j+1$ to be the size of the largest $L$-block in $P_1$ comes from the requirement in {\it C2} (Theorem \ref{coin-equi}) that the rightmost column in $\mathcal{R}$ is a single coin. Since the coin is appended as a new rightmost column of some ${\cal J}_{\mu_i}$, $k$ has to be the size of the largest $E$-block in $P_1$. Again, similar argument holds for {\it Rule J4}.  

    {\it Rule J6}: The requirement $k_i$ to be the size of the largest Jordan block in $P_1$ corresponding to each distinct $\mu_i \in \overline{\mathbb C}$ results from the action of removing the coins from all of the longest (=lowest) rows corresponding to all ${\cal J}_{\mu_i}$ in {\it C4} in Theorem \ref{coin-equi}. Also, the requirements $p\ge p_l$, $q \ge q_j$, for all existing $L_{p_l}$ and $L_{q_j}^T$ in $P_1$ results from the requirements that at least all non-zero columns of ${\cal R}$ and ${\cal L}$ are given coins.
\end{proof}
\subsection{Skew-symmetric matrix pencils under congruence}

A $n \times n$ skew-symmetric matrix pencil $A - \lambda B$ is said to be congruent to the pencil $C - \lambda D$ if and only if there is a non-singular matrix $S$ such that $(S^{T}AS,S^{T}BS)=(C,D)$. The orbit of $A - \lambda B$ under the action of the group $GL_n(\mathbb C)$ on the space of skew-symmetric matrix pencils by congruence is defined as follows:
\begin{equation*} \label{congorbit}
\orb_{A - \lambda B}^c = \{S^T (A - \lambda B) S \ | \ S \in GL_n(\mathbb C)\}.
\end{equation*} The next theorem provides a canonical form for skew-symmetric matrix pencil under congruence transformation.

\begin{theorem}{ \cite{Thom91} }\label{lkh}
Each skew-symmetric $n \times n$ matrix pencil $A - \lambda B$ is congruent 
to a direct sum,
determined uniquely up
to permutation of
summands, of pencils of
the form
$$H_{h}(\mu) = 
\begin{bmatrix}0&J_h(\mu)\\
-J_h(\mu)^T &0
\end{bmatrix}
- \lambda 
\begin{bmatrix}0&I_h\\
-I_h &0
\end{bmatrix}
, \text{ } \mu \in\mathbb C,$$
$$K_k= 
\begin{bmatrix}0&I_k\\
-I_k&0
\end{bmatrix}
- \lambda 
\begin{bmatrix}0&J_k(0)\\
-J_k(0)^T &0
\end{bmatrix}, \text{ }
M_m = 
\begin{bmatrix}0&F_m\\
-F_m^T &0
\end{bmatrix}
- \lambda
\begin{bmatrix}0&G_m\\
-G_m^T&0
\end{bmatrix}.$$
\end{theorem}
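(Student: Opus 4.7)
The plan is to reduce the classification to the Kronecker canonical form from Theorem \ref{kron} and then use the skew-symmetry to pair KCF blocks into the canonical skew-symmetric blocks $H_h(\mu)$, $K_k$, $M_m$. I would split the argument into three steps: pairing of the singular blocks, pairing of the regular blocks, and explicit realisation of each pair by congruence.

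For the singular part, note that $(A - \lambda B)^T = -(A - \lambda B)$, so $A - \lambda B$ is strictly equivalent to its negative transpose. Under Theorem \ref{kron}, the operation $X \mapsto -X^T$ interchanges the blocks $L_k \leftrightarrow L_k^T$ while sending each Jordan block $E_k(\mu)$ to a pencil strictly equivalent to itself. By the uniqueness of the KCF up to permutation of summands, the multiset of singular blocks of $A - \lambda B$ must be invariant under this interchange; hence every $L_k$ appearing in the KCF is matched by an $L_k^T$ of the same size.

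For the regular part, I would restrict the pair $(A,B)$ to each generalized eigenspace $V_\mu$. When $\mu \in \mathbb{C}$ is finite, $B$ is invertible on $V_\mu$, and $\omega(u,v):=u^{T}Bv$ is a non-degenerate skew-symmetric form on $V_\mu$. The operator $T:=B^{-1}A$ is $\omega$-symmetric, i.e.\ $\omega(Tu,v)=\omega(u,Tv)$, which one checks using $A^T=-A$ and $B^T=-B$. The classification of $\omega$-symmetric operators with a single eigenvalue on a complex symplectic space is classical: the space decomposes as an $\omega$-orthogonal sum of ``symplectic Jordan pairs,'' each consisting of two Jordan blocks of equal size paired by $\omega$. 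This yields precisely the $H_h(\mu)$ summands. The case $\mu = \infty$ is handled symmetrically by exchanging the roles of $A$ and $B$, producing the $K_k$ blocks.

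Finally, for each matched KCF pair I would exhibit an explicit change of basis realising the reduction by congruence: $(L_k, L_k^T) \rightsquigarrow M_k$, $(E_h(\mu), E_h(\mu)) \rightsquigarrow H_h(\mu)$ for finite $\mu$, and $(E_k(\infty), E_k(\infty)) \rightsquigarrow K_k$. In each case the congruence matrix is built from a permutation interleaving the two paired blocks together with some sign adjustments, and skew-symmetry of the resulting pencil is then immediate from the block form. Uniqueness up to permutation of summands follows from the uniqueness in Theorem \ref{kron} combined with the fact that the KCF type of each of $H_h(\mu)$, $K_k$, $M_m$ determines $h$, $k$, $m$, and $\mu$ uniquely. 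The main obstacle is the second step: over $\mathbb{C}$ every non-degenerate skew form is hyperbolic, but propagating this hyperbolicity through the Jordan filtration so that identical Jordan blocks can always be paired off requires a careful inductive construction of an $\omega$-symplectic basis adapted to the nilpotent part of $T - \mu I$, and this is the technical heart of the argument.
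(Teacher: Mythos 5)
The paper does not prove this statement: it is Thompson's classification \cite{Thom91}, imported as a known canonical form, so there is no internal proof to compare against. Judged on its own merits, your outline captures the right ideas for the \emph{regular} part: the form $\omega(u,v)=u^TBv$ is indeed non-degenerate and skew on the generalized eigenspace of a finite eigenvalue, $T=B^{-1}A$ is $\omega$-self-adjoint (equivalently, $BT$ is skew, so $T$ is skew-Hamiltonian), and the classical even-pairing of Jordan blocks for such operators, together with an adapted symplectic basis, produces the $H_h(\mu)$ summands by a genuine congruence; the rôle-swap of $A$ and $B$ gives $K_k$. You correctly flag the inductive construction of the symplectic Jordan basis as the technical heart there.

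The genuine gap is the singular part. Your transpose-invariance argument only constrains the \emph{Kronecker} data (it shows $\#L_k=\#L_k^T$, i.e., one direction of Lemma \ref{canbeskew}); it does not produce a congruence. Your final step, ``for each matched KCF pair exhibit a change of basis realising $(L_k,L_k^T)\rightsquigarrow M_k$,'' only shows that $M_k$ is \emph{strictly equivalent} to $L_k\oplus L_k^T$, which is immediate from the block shape of $M_k$. What is missing is the bridge from the original skew-symmetric pencil to the direct sum of $M$-blocks \emph{under congruence}: you must show that the minimal-index chains realizing the $L_k$ and $L_k^T$ blocks can be chosen so that the corresponding subspaces split off orthogonally with respect to both $A$ and $B$ simultaneously, and that the complement (on which the restricted pencil is regular) is likewise $A$- and $B$-orthogonal to them. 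This structured regularizing decomposition is the hard part of Thompson's proof; skipping it amounts to silently invoking the nontrivial theorem that two skew-symmetric pencils over $\mathbb{C}$ are congruent whenever they are strictly equivalent. Relatedly, your appeal to ``generalized eigenspaces'' presupposes that the singular part has already been split off, since these spaces are not well defined for a singular pencil; the order of the steps must be reversed, with the congruence-splitting of the singular part done first.
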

\hide{Therefore every skew-symmetric pencil $A - \lambda B$ is congruent to one 
in the following direct sum form  
\begin{equation*}
A - \lambda B=
\bigoplus_{j}^{}
\bigoplus_{i}^{}
H_{h_i}(\mu_j)
 \oplus
\bigoplus_{i}^{}K_{k_i}
 \oplus
\bigoplus_{i}^{}
M_{m_i},
\end{equation*}
where the first direct (double) sum corresponds to all distinct eigenvalues $\mu_j$.}

A matrix pencil can be \textit{skew-symmetrized} if its strict equivalence orbit contains a skew-symmetric matrix pencil. The following theorem is a necessary and sufficient condition for a matrix pencil to be skew-symmetrized.

\begin{lemma} { \cite{DmKa14} }\label{canbeskew}
A matrix pencil can be skew-symmetrized if and only if the following conditions hold:
\begin{enumerate}
\item For every $k$ and each distinct $\mu$, the KCF of the pencil contains 
an even number of blocks of the type $E_k(\mu)$;
\item For every $k$, the KCF contains 
an even number of blocks of the type $E_k(\infty)$;
\item For every $k$, the number of blocks $L_k$ is equal to the number of blocks $L_k^T$ in the KCF.
\end{enumerate}
\end{lemma}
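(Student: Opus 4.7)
The plan is to reduce the statement to a matching of Kronecker canonical forms. The key preparatory step is to compute the KCF (under strict equivalence) of each of the three types of skew-symmetric canonical blocks supplied by Theorem~\ref{lkh}. I expect the outcomes to be that $H_h(\mu)$ is strictly equivalent to $E_h(\mu)\oplus E_h(\mu)$, $K_k$ to $E_k(\infty)\oplus E_k(\infty)$, and $M_m$ to $L_m\oplus L_m^T$. In each case the argument is the same: the block pencil
\[
\begin{pmatrix} 0 & X(\lambda) \\ -Y(\lambda) & 0 \end{pmatrix}
\]
becomes block-diagonal with blocks $-Y(\lambda)$ and $X(\lambda)$ after left multiplication by the row-block swap $\bigl(\begin{smallmatrix}0 & I\\ I & 0\end{smallmatrix}\bigr)$, and each diagonal summand is strictly equivalent to the claimed KCF piece (using that $J_h(\mu)^T$ is similar to $J_h(\mu)$, and that transposition sends $L$-blocks to $L^T$-blocks).

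For the necessity direction, assume $P$ can be skew-symmetrized. Then $\orb_P^e$ contains a skew-symmetric pencil $P'$, so $P$ and $P'$ share a KCF. By Theorem~\ref{lkh}, $P'$ is congruent (hence in particular strictly equivalent) to a direct sum of $H$-, $K$-, and $M$-blocks. Combining this with the first step, the KCF of $P'$, and therefore of $P$, is a direct sum of pairs $E_h(\mu)\oplus E_h(\mu)$, pairs $E_k(\infty)\oplus E_k(\infty)$, and matched pairs $L_m\oplus L_m^T$, which is precisely conditions (1)--(3).

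For the sufficiency direction I would argue constructively. Given a KCF satisfying (1)--(3), group its blocks accordingly and, for each group, build the corresponding skew-symmetric block: two $E_h(\mu)$ become an $H_h(\mu)$, two $E_k(\infty)$ become a $K_k$, and an $L_m$ together with an $L_m^T$ become an $M_m$. The resulting direct sum is a genuinely skew-symmetric pencil, and by the first step its KCF coincides with that of $P$, so it lies in $\orb_P^e$. The main technical obstacle is the first step: one has to verify the three strict-equivalence reductions carefully, tracking signs and block permutations, because everything downstream hinges on these three identifications. Once that step is nailed down, both implications are essentially immediate.
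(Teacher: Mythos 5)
The paper cites Lemma~\ref{canbeskew} from \cite{DmKa14} without reproducing a proof, so there is no in-paper argument to compare against; I can only evaluate your proposal on its own terms. Your plan is correct and is the natural reduction: everything hinges on the three strict-equivalence identifications, namely $H_h(\mu)$ with $E_h(\mu)\oplus E_h(\mu)$, $K_k$ with $E_k(\infty)\oplus E_k(\infty)$, and $M_m$ with $L_m\oplus L_m^T$, each verified by the row-block swap you describe together with the facts that $J_h(\mu)^T$ is similar to $J_h(\mu)$, that transposition interchanges $L$- and $L^T$-blocks, and that a scalar factor $-1$ is absorbed into the left transformation. For $M_m$ do keep in mind the swap is between row-blocks of the unequal sizes $m$ and $m+1$, so the resulting block-diagonal pencil has the shuffled partition $(m+1)\times m$ and $m\times(m+1)$; this is exactly the bookkeeping you already flagged as the main technical point, not a gap. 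With those identifications in place, necessity follows from the uniqueness (up to permutation of summands) of the KCF in Theorem~\ref{kron} applied to the congruence canonical form of Theorem~\ref{lkh}, and sufficiency follows constructively by pairing blocks as you indicate and appealing again to KCF uniqueness to place the constructed skew-symmetric pencil in $\orb_P^e$. The argument is complete.
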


Lemma \ref{sumind} relates the normal rank of a skew-symmetric matrix pencil to the KCF.

\begin{lemma} \label{sumind}\cite{DmKa14}
Let $P$ be a matrix pencil, taken in the KCF, i.e., it is a direct sum of the blocks $E_{a_i}(\lambda),E_{b_i}(\infty),L_{c_i},$ and $L^T_{d_i}$. Then the normal rank of $P$ is equal to the sum of the indices, $a_i,b_i,c_i,$ and $d_i$, of all its Kronecker canonical blocks.
\end{lemma}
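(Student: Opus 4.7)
The plan is a direct block-by-block computation, using the fact that normal rank is additive over direct sums of matrix pencils (since the rank over $\mathbb{C}(\lambda)$ of a block-diagonal matrix is the sum of the ranks of its diagonal blocks). Therefore it suffices to check, for each of the four block types in the KCF, that its normal rank equals its index, and then sum over all blocks.

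First I would handle the regular blocks. The block $E_{a}(\mu) = J_{a}(\mu) - \lambda I_{a}$ is an $a \times a$ pencil with determinant $(\mu - \lambda)^{a}$, which is a nonzero polynomial in $\lambda$, so $E_{a}(\mu)$ has full normal rank $a$, which coincides with its index. Similarly $E_{b}(\infty) = I_{b} - \lambda J_{b}(0)$ has determinant $1$, and is therefore also of full normal rank $b$, matching its index. Next I would treat the singular blocks. The block $L_{c} = F_{c} - \lambda G_{c}$ is $c \times (c+1)$, so its normal rank is at most $c$; on the other hand, deleting (say) the first column gives a $c \times c$ pencil whose determinant is a nonzero polynomial in $\lambda$, so the rank is exactly $c$. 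The transposed block $L_{d}^{T}$ has the same normal rank $d$ by the same argument applied to its transpose.

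Putting these computations together, if $P$ is the direct sum $\bigoplus_{i} E_{a_{i}}(\mu_{i}) \oplus \bigoplus_{i} E_{b_{i}}(\infty) \oplus \bigoplus_{i} L_{c_{i}} \oplus \bigoplus_{i} L_{d_{i}}^{T}$, additivity yields
\[
\nrk(P) \;=\; \sum_{i} a_{i} + \sum_{i} b_{i} + \sum_{i} c_{i} + \sum_{i} d_{i},
\]
which is exactly the sum of the indices of all Kronecker canonical blocks of $P$.

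There is no real obstacle here: the only substantive step is the normal rank computation for the singular blocks $L_{c}$ and $L_{d}^{T}$, which reduces to exhibiting one $c \times c$ (respectively $d \times d$) submatrix whose determinant is a nonzero polynomial in $\lambda$. One could alternatively read the result off the identities $\nrk(P) = n - r_{0}(P) = m - l_{0}(P)$ recalled just before the lemma, by comparing the numbers of rows/columns contributed by each block type, but the direct block-wise computation above is shorter and self-contained.
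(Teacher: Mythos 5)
Your proof is correct: normal rank is additive over direct sums, and your block-by-block computations (full rank for $E_a(\mu)$ and $E_b(\infty)$ via their determinants, rank $c$ for $L_c$ by exhibiting a unimodular $c\times c$ submatrix, and the same for $L_d^T$ by transposition) are all accurate. The paper itself gives no proof of this lemma --- it is cited from [DmKa14] --- so there is no in-paper argument to compare against; your self-contained verification is the standard one.
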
 
\begin{remark}\label{even-normal-rank-SSMP}
Lemmas \ref{canbeskew} and \ref{sumind} imply that normal rank of a skew-symmetric matrix pencil is an even number.
\end{remark}
Lemma \ref{jordaneven} is an observation about integer partition corresponding to Jordan blocks for skew-symmetric matrix pencils.
\begin{lemma} \label{jordaneven}
Let $P$ be a skew-symmetric matrix pencil. For every $\mu$, each $j_i^{\mu}(P)$ in ${\cal J}_{\mu}(P)$ is even, for all $i = 1,2,\ldots$
\end{lemma}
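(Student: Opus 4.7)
The plan is to derive the lemma directly from Lemma \ref{canbeskew} together with the definition of the Weyr characteristic ${\cal J}_{\mu}$. Recall that $j_i^{\mu}(P)$ is, by definition, the number of Jordan blocks in the KCF of $P$ corresponding to the eigenvalue $\mu$ whose size is at least $i$. If I write $N_k^{\mu}(P)$ for the number of $E_k(\mu)$ blocks appearing in the KCF of $P$, then by counting,
\[
j_i^{\mu}(P) \;=\; \sum_{k \ge i} N_k^{\mu}(P).
\]

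The first step is to verify that Lemma \ref{canbeskew} applies to $P$. Since $P$ is itself skew-symmetric, its strict equivalence orbit $\orb_P^e$ trivially contains a skew-symmetric representative (namely $P$), so $P$ is skew-symmetrizable in the sense of Lemma \ref{canbeskew}. Therefore conditions (1) and (2) of that lemma give that $N_k^{\mu}(P)$ is even for every $k \in \mathbb{Z}_+$ and every $\mu \in \overline{\mathbb{C}}$: condition (1) covers finite $\mu$, while condition (2) handles $\mu = \infty$.

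The second step is simply to combine the two observations: for every $i$ and every $\mu$, the number $j_i^{\mu}(P)$ is a finite sum of even non-negative integers and is therefore even. Since the indexing is arbitrary, every entry of each partition ${\cal J}_{\mu}(P)$ is even, which is the claim.

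The argument is essentially a bookkeeping consequence of Lemma \ref{canbeskew}, so I do not anticipate any substantial obstacle. The only point that deserves a moment's care is that the Weyr data ${\cal J}_{\mu}$ of a skew-symmetric pencil is, by the definition given before Theorem \ref{equi-struc-trans}, extracted from its \emph{general} Kronecker canonical form rather than from the skew-symmetric canonical form of Theorem \ref{lkh}; once this is made explicit, the implication ``even number of $E_k(\mu)$ blocks at every level'' $\Rightarrow$ ``${\cal J}_{\mu}(P)$ has only even parts'' is immediate.
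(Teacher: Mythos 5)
Your proof is correct and follows exactly the paper's argument: both express $j_i^{\mu}(P)$ as the sum over $k\ge i$ of the number of $E_k(\mu)$ blocks and invoke Lemma \ref{canbeskew} to conclude each summand, hence the sum, is even. The only addition is your (valid) explicit remark that $P$, being skew-symmetric, is trivially skew-symmetrizable, which the paper leaves implicit.
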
 
\begin{proof}
Let the total number of Jordan blocks corresponding to the eigenvalue $\mu$ of size equal to {\it{i}} be $n_i^{\mu}(P)$. 
So, $j_i^{\mu}(P)$ = $n_i^{\mu}(P) + n_{i+1}^{\mu}(P) + \dots + n_t^{\mu}(P) $, where {\it{t}} is the size of the largest Jordan block. By Lemma \ref{canbeskew}, each $n_i^{\mu}(P)$ is even and the result follows.
\end{proof}
\section{Orbit cover relations for 
skew-symmetric matrix pencils}
\label{orbit-cover-relations}

In this section we develop a theory that helps us to formulate a necessary and sufficient condition for $\orb_{P_1}^c$ to cover $\orb_{P_2}^c$, for two skew-symmetric matrix pencils $P_1$ and $P_2$, in terms of both coin moves and structure transitions of canonical blocks. The following lemma is an observation about closure of orbits of skew-symmetric matrix pencils that will be used in the proof of the main result of this paper.

\begin{lemma}\label{SSMP-nei-MP-nei}
Let $P_1$ and $P_2$ be two skew-symmetric matrix pencils such that $\orb_{P_1}^c$ covers $\orb_{P_2}^c$. Then there exist some $p \times q$ matrix pencils $W_i$ such that rows and columns of $P_i$ can be permuted and written as 
$\begin{bmatrix}
 0 & W_i\\
 -W_i^T & 0
\end{bmatrix}, i = 1,2$ and $\orb_{W_1}^e$ covers $\orb_{W_2}^e$.
\end{lemma}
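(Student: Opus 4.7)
The plan is to construct $W_i$ directly from the skew-canonical form of $P_i$, match their sizes by exploiting the orientation freedom of $M$-blocks, and establish the strict-equivalence cover via the correspondence between block-diagonal congruence and strict equivalence, finishing with a contradiction argument for minimality.

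By Theorem~\ref{lkh}, after congruence each $P_i$ is a direct sum of canonical blocks $H_h(\mu), K_k, M_m$, and every such block has the form $\begin{bmatrix} 0 & \bullet \\ -\bullet^T & 0 \end{bmatrix}$ for some $\bullet \in \{E_h(\mu), E_k(\infty), L_m\}$. Permuting rows and columns to group the ``upper halves'' together yields $P_i = \begin{bmatrix} 0 & W_i \\ -W_i^T & 0 \end{bmatrix}$, with $W_i$ the corresponding direct sum of $\bullet$-pencils. For each $M_m$-block we choose one of two orientations, contributing $L_m$ (of size $m \times (m+1)$) or $-L_m^T$ (of size $(m+1) \times m$) to $W_i$; hence $W_i$ can be given any dimensions $(p_i, q_i)$ with $p_i + q_i = n$ and $q_i - p_i$ of the same parity as the number $t_i$ of $M$-blocks in $P_i$. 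Because $\orb_{P_2}^c \subset \overline{\orb_{P_1}^c} \subset \overline{\orb_{P_1}^e}$ and $\orb_{P_2}^e$ is $GL_n$-stable, $\orb_{P_2}^e \subset \overline{\orb_{P_1}^e}$, so Theorem~\ref{equi-struc-trans} links the KCFs of $P_1, P_2$ by a sequence of G1--G6 transitions; examining those preserving skew-symmetrizability (Lemma~\ref{canbeskew}) shows each alters the number of $M$-blocks by only $0$ or $\pm 2$. Consequently $t_1 \equiv t_2 \pmod 2$ and orientations may be fixed so $W_1, W_2$ are both of a common size $p \times q$.

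The key observation is that for any block-diagonal $S = \diag(A, D) \in GL_n(\mathbb C)$, $S^T \begin{bmatrix} 0 & W \\ -W^T & 0 \end{bmatrix} S = \begin{bmatrix} 0 & A^T W D \\ -(A^T W D)^T & 0 \end{bmatrix}$, so block-diagonal congruence on $P$ is exactly strict equivalence on $W$; moreover the skew canonical form of $P = \begin{bmatrix} 0 & W \\ -W^T & 0 \end{bmatrix}$ is read off the KCF of $W$ (each $E$-block of $W$ gives an $H$- or $K$-block of $P$; each $L$- or $L^T$-block of $W$ of index $m$ gives an $M_m$-block of $P$). Picking orientations consistent with the specific skew structure transition from $P_1$ to $P_2$, the transition lifts to a single G1--G6 move on $W$, and $\orb_{W_2}^e \subset \overline{\orb_{W_1}^e}$ follows from Theorem~\ref{equi-struc-trans}. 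For minimality, suppose toward contradiction there exists a $p \times q$ pencil $W'$ with $\orb_{W'}^e \ne \orb_{W_1}^e, \orb_{W_2}^e$ and $\orb_{W_2}^e \subset \overline{\orb_{W'}^e} \subsetneq \overline{\orb_{W_1}^e}$; form $P' := \begin{bmatrix} 0 & W' \\ -W'^T & 0 \end{bmatrix}$. The block-diagonal correspondence yields $\orb_{P_2}^c \subset \overline{\orb_{P'}^c} \subset \overline{\orb_{P_1}^c}$. The three orbits are distinct: were $\orb_{P'}^c$ equal to $\orb_{P_1}^c$ (respectively $\orb_{P_2}^c$), then $W'$ and $W_1$ (resp.\ $W_2$) would differ only by swapping some $L_a$-blocks for $L_a^T$-blocks (which is all that a common skew canonical form leaves undetermined), and each such swap shifts $\mathcal{R}$ and $\mathcal{L}$ of $W$ in opposite directions, producing closure-incomparable orbits and contradicting the assumed inclusions. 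Thus $\orb_{P'}^c$ is strictly intermediate, contradicting the cover hypothesis.

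The delicate part of this plan is the orientation-choice argument for lifting the skew structure transition to a single G1--G6 move on $W$; this calls for a short case analysis over the skew-symmetric structure transitions obtained by applying G1--G6 of Theorem~\ref{equi-struc-trans} in pairs, as dictated by Lemma~\ref{canbeskew}. The contradiction argument for minimality, by contrast, is purely structural and relies only on the block-diagonal congruence/strict equivalence correspondence together with the incomparability of distinct $W$-orbits that map to the same skew canonical form of $P$.
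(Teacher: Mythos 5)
Your overall skeleton matches the paper's: split each $P_i$ by permutation as $\begin{bmatrix} 0 & W_i\\ -W_i^T & 0\end{bmatrix}$, use the fact that block-diagonal congruence on such a pencil is strict equivalence on $W$ to turn an intermediate $W'$ into an intermediate skew-symmetric pencil $P'$, and conclude by contradicting the cover hypothesis. However, there is a genuine gap at the step where you establish $\orb_{W_2}^e\subset\overline{\orb_{W_1}^e}$ in the first place (without this, ``$\orb_{W_1}^e$ does not cover $\orb_{W_2}^e$'' does not yield any intermediate $W'$, and there is no cover to speak of). Your justification --- ``picking orientations consistent with the specific skew structure transition from $P_1$ to $P_2$, the transition lifts to a single G1--G6 move on $W$'' --- is circular: the fact that a cover $\orb_{P_1}^c \succ \orb_{P_2}^c$ is realized by a single skew-symmetric structure transition is precisely Theorem \ref{congruence} (and its corollary Theorem \ref{SSMP_struc_trans}), which are proved later \emph{using} this lemma. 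At this stage all you actually have is $\orb_{P_2}^e\subset\overline{\orb_{P_1}^e}$, i.e.\ a G1--G6 sequence between the full $n\times n$ Kronecker forms; the intermediate pencils in such a sequence need not be skew-symmetrizable, so the moves need not occur ``in pairs as dictated by Lemma \ref{canbeskew}'', and in any case a closure relation between direct sums $W_1\oplus(-W_1^T)$ and $W_2\oplus(-W_2^T)$ does not automatically descend to a closure relation between the halves $W_1$ and $W_2$. This descent is the nontrivial content that the paper imports from \cite{DmKa14} (its Lemmas 3.7 and 3.8), and it is exactly why the paper fixes a very specific $W_2$: the top-right corners of the $H$- and $K$-summands of $P_2$ together with the $f=\tfrac{1}{2}\big(r_0(P_2)-r_0(P_1)\big)$ largest $L^T$-blocks and the $r_0(P_2)-f$ smallest $L$-blocks, a choice that simultaneously matches the size of $W_1$ and guarantees the closure relation. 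Your deferred ``short case analysis'' is precisely this missing ingredient, so as written the proposal does not close.

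Two smaller remarks. First, the claim that $W_i$ can be given \emph{any} dimensions $(p_i,q_i)$ with $p_i+q_i=n$ and the stated parity is an overstatement: only $|q_i-p_i|\le t_i$ is achievable by reorienting $M$-blocks; size-matching still works because both achievable ranges are centred at $n/2$, but the argument should say so. Second, your treatment of the distinctness $P'\neq P_1,P_2$ (distinct equal-size $W$-orbits with the same skew-symmetric canonical form are closure-incomparable, since redistributing minimal indices between $L$- and $L^T$-blocks cannot be achieved by downward moves in both $\mathcal R$ and $\mathcal L$) is sound and in fact somewhat more explicit than the paper's corresponding remark; this part is not where the problem lies.
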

\begin{proof}
Without loss of generality, we assume that $P_1$ and $P_2$ are in the canonical form. By permutations of the rows and corresponding permutations of the columns, the matrix pencils $ P_i, i=1,2$, can be written as  
\begin{equation}\label{permute_P_i}
\widetilde{P_i} = Q_i^TP_iQ_i=
\begin{bmatrix}
0&W_i\\
- W_i^T&0\\
\end{bmatrix},  
\end{equation} 
where $W_i = X_i - \lambda Y_i$ is a $p\times (n-p)$ pencil and $Q_i$ is a permutation matrix for $i=1,2$.
The block $W_1$ in \eqref{permute_P_i} is the direct sum of the top-right corner blocks of $H$-, $K$-, $M$-summands in the skew-symmetric canonical forms of $P_1$, the rest of the blocks form $-W_1^T$. Let, $f = \frac{r_0(P_2) - r_0(P_1)}{2}$. By Lemma 3.7 in \cite{DmKa14}, $f$ is an integer. The block $W_2$ in \eqref{permute_P_i} is the direct sum of the top-right corner blocks of the $H$- and  $K$-summands in the skew-symmetric canonical form of $P_2$ as well as the $f$ largest $L^T$ and ($r_0(P_2)-f$) smallest $L$-blocks of $P_2$, 
all the remaining blocks of $P_2$ forms $-W_2^T$. Note that the $L$- and $L^T$- blocks are parts of the $M$-summands. 

Now assume $\orb_{W_1}^e$ does not cover $\orb_{W_2}^e$ then there is $W'$ (not equal to $W_1$ or $W_2$) such that 
$$
\overline{\orb_{W_1}^e} \supset \orb_{W'}^e 
\quad \text{and} \quad
\overline{\orb_{W'}^e} \supset \orb_{W_2}^e.$$
Define 
$
P'=
\begin{bmatrix}
0&W'\\
- (W')^{T}&0
\end{bmatrix}.$ 
By \cite[Lemma 3.8]{DmKa14}, we have, 
\begin{equation} 
\overline{\orb_{P_1}^c} \supset \orb_{P'}^c 
\quad \text{and} \quad
\overline{\orb_{P'}^c} \supset \orb_{P_2}^c.
\end{equation}
It remains to prove that $P' \neq P_1$ and $P' \neq P_2$. Since $\overline{\orb_{W_1}^e} \supset \orb_{W'}^e$, we obtain $W'$ from $W_1$ by using a sequence of structure transitions (Theorem \ref{equi-struc-trans}). Thus, $P'$ is obtained from $P_1$ by a sequence of structure transitions of canonical blocks under strict equivalence (Theorem \ref{equi-struc-trans}). Thus $P' \neq P_1$. Similarly, $P' \neq P_2$. Hence $\orb_{P_1}^c$ does not cover $\orb_{P_2}^c$ and we arrived at the contradiction.
\end{proof}


Theorem \ref{congruence} is the main result of the manuscript, that states a necessary and sufficient condition for $\orb_{P_1}^c$ to cover $\orb_{P_2}^c$, for two skew-symmetric matrix pencils $P_1$ and $P_2$. 
\begin{theorem} \label{congruence}
Let $P_1$ and $P_2$ be two skew-symmetric matrix pencils. The congruence orbit $\orb_{P_1}^c$ covers the congruence orbit $\orb_{P_2}^c$ if and only if $P_2$ can be obtained from $P_1$ by applying one of the following four structure transitions to the canonical blocks of $P_1$:
\begin{itemize}  
\item{Rule 1.} $M_{j} \oplus M_{k} \rightsquigarrow M_{j-1} \oplus M_{k+1}$, $1\le j\le k$ such that $k - j = \min\big\{\{k_1-j,k_2-j,\dots\}\cup \{k-j_1,k-j_2,\dots\}\big\}$, $j_t\le k$ and $j \le k_u $;
\item{Rule 2.} $H_{j-1}(\mu) \oplus H_{k+1}(\mu) \rightsquigarrow H_{j}(\mu) \oplus H_{k}(\mu)$, $\mu \in \overline{\mathbb C}$, $1\le j \le k$ such that $k - j = \min\big\{\{k_1-j,k_2-j,\dots\}\cup \{k-j_1,k-j_2,\dots\}\big\}$, $j_t\le k$ and $j \le k_u $;
\item{Rule 3.} $M_{j+1} \oplus H_{k}(\mu) \rightsquigarrow M_{j} \oplus H_{k+1}(\mu)$, $j,k=0,1,2,...$ and $\mu \in \overline{\mathbb C}$, such that $k$ and $j+1$ are the sizes of the largest $H$-block and $M$-block in $P_1$, respectively, provided that $r_{j+1}(P_1) = l_{j+1}(P_1) = 1$; 
\item{Rule 4.} $\bigoplus_{i=1}^tH_{k_i}(\mu_i) \rightsquigarrow M_{x} \oplus M_{y}$, $x\ge z$, $y \ge z$, for all $M_{z}$ in $P_1$; $|x-y| \le 1$; $x+y+1 = \sum_{i=1}^d k_i, k_i$ is the size of the largest Jordan block in $P_1$ corresponding to each distinct $\mu_i \in \overline{\mathbb C}.$
\end{itemize}
\end{theorem}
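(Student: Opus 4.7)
The plan is to reduce the congruence-orbit cover problem to the strict-equivalence cover problem via Lemma \ref{SSMP-nei-MP-nei}: assuming $\orb_{P_1}^c$ covers $\orb_{P_2}^c$, that lemma rewrites $P_i$ (after a simultaneous row/column permutation) as $\begin{bmatrix} 0 & W_i \\ -W_i^T & 0 \end{bmatrix}$ in such a way that $\orb_{W_1}^e$ covers $\orb_{W_2}^e$. Theorem \ref{struc-equi} then forces $W_2$ to arise from $W_1$ by exactly one of the six rules J1--J6, and the task reduces to translating each case into a structure transition on the skew-symmetric canonical form of $P_1$ and matching it with one of Rules 1--4.

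The translation uses the block correspondence: an $H_h(\mu)$-block of $P_i$ contributes an $E_h(\mu)$-block to $W_i$, a $K_k$-block contributes $E_k(\infty)$, and each $M_m$-block contributes an $L_m$- or $L_m^T$-block (with its mirror in $-W_i^T$). Then J1 or J2 (acting on the $L$/$L^T$ pieces coming from $M$-blocks) realizes Rule 1; J5 (acting on the $E$ pieces of $H$-blocks) realizes Rule 2; J3 or J4 (interacting an $M$-piece with an $H$-piece) realizes Rule 3; and J6 (producing one $L$- and one $L^T$-block from $E$-pieces) recombines with the mirror data in $-W_i^T$ to yield two $M$-blocks, giving Rule 4. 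The minimality conditions on $k-j$ in Rules 1 and 2, the largest-block and $r_{j+1}(P_1)=l_{j+1}(P_1)=1$ conditions in Rule 3 (the latter equality being automatic because each $M$-block contributes symmetrically to $\mathcal{R}$ and $\mathcal{L}$), and the equality $x+y+1=\sum_{i=1}^d k_i$ together with the lower bounds in Rule 4 all transfer directly from their J-rule counterparts. The only constraint not supplied directly by Theorem \ref{struc-equi} is $|x-y|\le 1$ in Rule 4; if $x \ge y+2$, then the factorization $\bigoplus_{i=1}^t H_{k_i}(\mu_i) \rightsquigarrow M_{x-1}\oplus M_{y+1}$ (a Rule 4 step with imbalance smaller by two) followed by $M_{y+1}\oplus M_{x-1} \rightsquigarrow M_y \oplus M_x$ (a Rule 1 step) exhibits a skew-symmetric intermediate, contradicting covering.

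For the converse, I would verify that each of Rules 1--4 produces a cover by applying the corresponding J-rule to $W_1$ to obtain $W_2$, and then lifting $\orb_{W_2}^e \subset \overline{\orb_{W_1}^e}$ to $\orb_{P_2}^c \subset \overline{\orb_{P_1}^c}$ via \cite[Lemma~3.8]{DmKa14} (the same tool invoked inside Lemma \ref{SSMP-nei-MP-nei}). The main obstacle will be ruling out skew-symmetric intermediates rigorously when the hypotheses of Rules 1--4 are just barely satisfied. Justifying the bound $|x-y|\le 1$ in Rule 4 is the most delicate step, since it relies on a two-step factorization (Rule 4 followed by Rule 1) rather than on a purely local coin argument, and one must ensure that the constructed intermediate orbit is genuinely distinct from both endpoints and is compatible with the pairing constraints of Lemma \ref{canbeskew} (even multiplicities of $E$-blocks, equal counts of $L_k$ and $L_k^T$).
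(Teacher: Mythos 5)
Your proposal follows essentially the same route as the paper: reduce to the strict-equivalence cover of $W_1,W_2$ via Lemma \ref{SSMP-nei-MP-nei}, translate the applicable J-rules of Theorem \ref{struc-equi} case by case into Rules 1--4 (noting that $W_1$ contains no $L^T$-blocks, so only J1, J3, J5, J6 occur), obtain $|x-y|\le 1$ by exhibiting a Rule 4 plus Rule 1 factorization when the imbalance exceeds one, and prove the converse by lifting with \cite[Lemma~3.8]{DmKa14} and excluding intermediate orbits. The paper's argument is the same in all essentials, including the specific two-step contradiction for Rule 4.
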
 
\begin{proof}
First, observe that, by Lemma \ref{canbeskew}, $\mathcal{L}(P) =\mathcal{R}(P)$, for a skew-symmetric matrix pencil $P$. So we consider only one of them, unless mentioned. As in Lemma \ref{SSMP-nei-MP-nei}, the rows and columns of $P_i$ are permuted such that 
\begin{equation*} \label{permute}
    \widetilde P_i =\begin{bmatrix}
 0 & W_i\\
 -W_i^T & 0
\end{bmatrix}, \space \text{for } i = 1,2.
\end{equation*} 
We start by proving the necessary part. Since $\orb_{P_1}^c$ covers $\orb_{P_2}^c$, $\orb_{W_1}^e$ covers $\orb_{W_2}^e$ (Lemma \ref{SSMP-nei-MP-nei}). Thus, in particular, there exist non-singular matrices $Q$ and $R$ and an arbitrarylly small (entrywise) matrix $E$ such that $
Q^{-1}(W_2 + E)R = W_1, \text{ and, respectively, } R^T(-W_2 - E)^TQ^{-T} = -W_1^T
.$ Combining the two equations above, we get,
\begin{equation}\label{main_eq}
\begin{bmatrix}
   0 & W_1\\
   -W_1^T & 0
\end{bmatrix} =
\begin{bmatrix}
   Q^{-T} & 0\\
   0 & R
\end{bmatrix}^T
\begin{bmatrix}
   0 & W_2 + E\\
   -W_2^T - E^T & 0
\end{bmatrix}
\begin{bmatrix}
   Q^{-T} & 0\\
   0 & R
\end{bmatrix}.
\end{equation}
Let N =
{\small$\begin{bmatrix} 
  Q^{-T} & 0\\
   0 & R
\end{bmatrix}$}, $F =$ 
\small{
$\begin{bmatrix} 
  0 & E\\
  -E^T & 0 
\end{bmatrix}$},
$W_1 = \bigoplus_{j}^{}
\bigoplus_{i}^{}
E_{h_i}(\mu_j)
 \oplus
(\bigoplus_{i}^{}
L_{m_i}), \mu_j \in \overline{\mathbb C}.$ \hide{When $nrk(P_2) = nrk(P_1)$, then $f=0$, i.e., the total number of $L$-blocks/$L^{T}$-blocks remains fixed.
The relation between integer partitions of $P_i$ and $W_i$ are as follows:
\begin{equation}\label{relation}
    {\cal R} (W_i) = {\cal R} (P_i), \text{ } 
{\cal L} (W_i) = 0, \text{ } \mathcal{J}_{\mu_{j}}(W_i) = \dfrac{1}{2} \mathcal{J}_{\mu_{j}}(P_i), \text{ for all } j; \text{ }
i = 1,2.
\end{equation}
When $nrk(P_2) \neq nrk(P_1)$, then $r_0(P_1) \neq r_0(P_2)$ and this implies $f \neq 0$.} Again, since $\orb_{W_1}^e$ covers $\orb_{W_2}^e$, by Theorem \ref{struc-equi}, and taking into account the absence of $L^T$-blocks in $W_1$, one of the following four cases holds for $W_1$ and $W_2$.\\
\textbf{Case 1}: \hide{$nrk(W_2) = nrk(W_1)$, ${\cal R}(W_1) \succ {\cal R} (W_2)$, and ${\cal J}_{\mu_j}(W_1) = {\cal J}_{\mu_j}(W_2)$, for all {\it{j}}.
In this case,} $W_2$ is obtained from $W_1$ by using {\it{J1}} from Theorem \ref{struc-equi}, i.e., 
\begin{equation}\label{expression_1}
    L_{m_p} \oplus L_{m_q} \rightsquigarrow L_{m_p-1} \oplus L_{m_q+1}, \text{ } 1 \le m_p \le m_q,
\end{equation} 
such that $m_q - m_p = \min\big\{\{m_{q_1}-m_p,m_{q_2}-m_p,\dots\}\cup \{m_q-m_{p_1},m_q-m_{p_2},\dots\}\big\}$, $m_{p_t}\le m_q$ and $m_p \le m_{q_u} $. The block $W_1$ can be rewritten as  $$(L_{m_p} \oplus L_{m_q}) \oplus (\bigoplus_{j}^{}
\bigoplus_{i}^{}
E_{h_i}(\mu_j))
 \oplus
(\bigoplus_{k}^{}
L_{m_k}).$$
After applying \eqref{expression_1} to $W_1$, we get the block $W_2$, that can be rewritten as $$(L_{m_p-1} \oplus L_{m_q+1}) \oplus (\bigoplus_{j}^{}
\bigoplus_{i}^{}
E_{h_i}(\mu_j))
 \oplus
(\bigoplus_{k}^{}
L_{m_k}).$$
Let K = $\bigoplus_{j}^{} \bigoplus_{i}^{} E_{h_i}(\mu_j) \oplus (\bigoplus_{k}^{} L_{m_k})$, $k \neq p, q,$ and $X = $
\small{$\begin{bmatrix} 
  0 & K\\
  -K^T   & 0
\end{bmatrix}$}. 
Using the expressions for $W_1$ and $W_2$, we get,
$$ \widetilde P_1 =
\begin{bmatrix}
   0 & W_1\\
   -W_1^T & 0
\end{bmatrix} =
\begin{bmatrix}
   0 & L_{m_p} \oplus L_{m_q}\oplus K\\
   -(L_{m_p} \oplus L_{m_q} \oplus K)^T & 0
\end{bmatrix} \text{ and }
$$
$$ \widetilde P_2 =
\begin{bmatrix}
   0 & W_2\\
   -W_2^T & 0
\end{bmatrix} =
\begin{bmatrix}
   0 & L_{m_p-1} \oplus L_{m_q+1}\oplus K\\
   -(L_{m_p-1} \oplus L_{m_q+1} \oplus K)^T & 0
\end{bmatrix}.$$ 
By \eqref{main_eq} $\widetilde P_{1} = N^{T}(\widetilde P_{2} + F)N. $ The rows and the columns of $\widetilde P_1$ and $\widetilde P_2$ can be permuted by some permutation matrices $Y_1$ and $Y_2$ such that
$Y_1^{T}\widetilde P_{1}Y_1 = Y_1^{T}N^{T}Y_2(Y_2^{T}\widetilde P_{2}Y_2 + Y_2^{T}FY_2)Y_2^{T}NY_1 $ or, equivalently, $$\hat{P_{1}} = (Y_2^{T}NY_1)^{T}(\hat{P_{2}} + F')(Y_2^{T}NY_1), \text{ where } F' = Y_2^{T}FY_2,$$
$$\hat{P_{1}} =
\begin{bmatrix}
   0 & L_{m_p}\\
   -L_{m_p}^T & 0
\end{bmatrix} \oplus
\begin{bmatrix}
   0 & L_{m_q}\\ 
   -L_{m_q}^T & 0
\end{bmatrix}\oplus
\begin{bmatrix}
   0 & K\\
   -K^T & 0
\end{bmatrix} = 
M_{m_p} \oplus M_{m_q} \oplus X, \text{ and }
$$
$$\hat{P_{2}} =
\begin{bmatrix}
   0 & L_{m_p-1}\\
   -L_{m_p-1}^T & 0
\end{bmatrix} \oplus
\begin{bmatrix}
   0 & L_{m_q+1}\\ 
   -L_{m_q+1}^T & 0
\end{bmatrix} \oplus
\begin{bmatrix}
   0 & K\\
   -K^T & 0
\end{bmatrix} = M_{m_p-1} \oplus M_{m_q+1} \oplus X.
$$
Since $\hat{P_{1}} \text{ and } \hat{P_{2}}$ are, respectively, congruent to $P_{1} \text{ and } P_{2}$, the structure transition from $P_1$ to $P_2$ is of the type $$
M_{m_p} \oplus M_{m_q} \rightsquigarrow M_{m_p-1} \oplus M_{m_q+1},$$ such that $m_q - m_p = \min\big\{\{m_{q_1}-m_p,m_{q_2}-m_p,\dots\}\cup \{m_q-m_{p_1},m_q-m_{p_2},\dots\}\big\}$, $m_{p_t}\le m_q$ and $m_p \le m_{q_u} $. Note that, later in the proof, the notations $N$, $F'$, $Y_{i}$, $\hat{P_{i}}, i = 1,2,$ are local to the particular case in which they will be mentioned.\\
\textbf{Case 2}: \hide{$nrk(W_2) = nrk(W_1)$, ${\cal R} (W_2) = {\cal R} (W_1)$, ${\cal J}_{\mu_j}(W_2) \succ {\cal J}_{\mu_j}(W_1)$, for some eigenvalue ${\mu_j}$. Note, ${\cal J}_{\mu_i}(W_2) \succ {\cal J}_{\mu_i}(W_1)$,  and ${\cal J}_{\mu_j}(W_2) \succ {\cal J}_{\mu_j}(W_1)$, for $\mu_i \neq \mu_j$, contradicts $\orb_{W_1}^e$ covers $\orb_{W_2}^e$. Now,}$W_2$ is obtained from $W_1$ by using {\it{J5}} from Theorem \ref{struc-equi}: 
\begin{equation}\label{expression_2}
        E_{h_p-1}(\mu_1) \oplus E_{h_q+1}(\mu_1) \rightsquigarrow E_{h_p}(\mu_1) \oplus E_{h_q}(\mu_1), \space 1 \le h_p \le h_q,
\end{equation}
such that $h_q - h_p = \min\big\{\{h_{q_1}-h_p,h_{q_2}-h_p,\dots\}\cup \{h_q-h_{p_1},h_q-h_{p_2},\dots\}\big\}$, $h_{p_t}\le h_q$ and $h_p \le h_{q_u} $. The block $W_1$ can be rewritten as $$(E_{h_p-1}(\mu_1) \oplus E_{h_q+1}(\mu_1)) \oplus (\bigoplus_{j}^{}
\bigoplus_{i}^{}
E_{h_i}(\mu_j))
\oplus
(\bigoplus_{i}^{}
L_{m_i}), \text{ } j \neq 1.$$ 
After applying \eqref{expression_2} to $W_1$, we obtain the block $W_2$, that can be rewritten as $$(E_{h_p}(\mu_1) \oplus E_{h_q}(\mu_1)) \oplus (\bigoplus_{j}^{}
\bigoplus_{i}^{}
E_{h_i}(\mu_j))
 \oplus
(\bigoplus_{i}^{}
L_{m_i}),  \text{ } j \neq 1.$$
Let K $= \bigoplus_{j}^{}
\bigoplus_{i}^{}
E_{h_i}(\mu_j)
 \oplus
(\bigoplus_{i}^{}
L_{m_i}), (j,i) \neq (1,q)$, $(j,i) \neq (1,p)$, and $X =$  
\small{$\begin{bmatrix} 
  0 & K\\
  -K^T & 0
\end{bmatrix}$}. Similarly to Case 1, $\widetilde P_1$, $\widetilde P_2$, constructed from the respective blocks $W_1$, $W_2$ can be transformed using some permutation matrices $Y_{1}$ and $Y_{2}$, respectively, such that $Y_1^{T}\widetilde P_{1}Y_1 = Y_1^{T}N^{T}Y_2(Y_2^{T}\widetilde P_{2}Y_2 + Y_2^{T}FY_2)Y_2^{T}NY_1 $, or, equivalently,
$$ \hat{ P_{1}} = (Y_2^{T}NY_1)^{T}(\hat{P_{2}} + F')(Y_2^{T}NY_1),\text{ where } F' = Y_2^{T}FY_2,$$ $$\hat{ P_{1}} = H_{h_p-1}(\mu_1) \oplus H_{h_q+1}(\mu_1) \oplus X \text{ and } \hat{P_{2}} = H_{h_p}(\mu_1) \oplus H_{h_q}(\mu_1) \oplus X. $$ Again, similarly to Case 1, the required structure transition from $P_1$ to $P_2$ is of the type $$
H_{h_p-1}(\mu) \oplus H_{h_q+1}(\mu) \rightsquigarrow H_{h_p}(\mu) \oplus H_{h_q}(\mu)$$ where $\mu \in \overline{\mathbb C}$ and $h_q - h_p = \min\big\{\{h_{q_1}-h_p,h_{q_2}-h_p,\dots\}\cup \{h_q-h_{p_1},h_q-h_{p_2},\dots\}\big\}$, $h_{p_t}\le h_q$ and $h_p \le h_{q_u} $.\\
\textbf{Case 3}: \hide{$\nrk(W_2)  = \nrk(W_1)$, ${\cal J}_{\mu_j}(W_2) \succ {\cal J}_{\mu_j}(W_1)$, for some eigenvalue ${\mu_j}$, and ${\cal R}(W_1) \succ {\cal R} (W_2)$. Note, any sequential use of {\it{Rule 1}} and {\it{Rule 2}} will contradict to our assumption that $\orb_{W_1}^c$ covers $\orb_{W_2}^c$. Now} We obtain $W_2$ from $W_1$ by using the third structure transition, i.e., {\it{J3}} from Theorem \ref{struc-equi}:
\begin{equation}\label{expression_3}
L_{m_p+1} \oplus E_{h_q}(\mu_1) \rightsquigarrow L_{m_p} \oplus E_{h_q+1}(\mu_1),\end{equation} where $m_p+1$ and $h_q$ are the sizes of the largest $L$-block and $E$-block, respectively, of $W_1$, such that $r_{m_p+1}(W_1) = 1$. Thus, the block $W_1$ can be rewritten as
$$L_{m_p+1} \oplus E_{h_q}(\mu_1) \oplus (\bigoplus_{j}^{}
\bigoplus_{i}^{}
E_{h_i}(\mu_j))
 \oplus
(\bigoplus_{k}^{}
L_{m_k}),  \text{ } j \neq 1.$$
After applying \eqref{expression_3} to $W_1$, we get the block $W_2$, that can be rewritten as
$$L_{m_p} \oplus E_{h_q+1}(\mu_1) \oplus (\bigoplus_{j}^{}
\bigoplus_{i}^{}
E_{h_i}(\mu_j))
 \oplus
(\bigoplus_{k}^{}
L_{m_k}),  \text{ } j \neq 1.$$
Let K = $\bigoplus_{j}^{} \bigoplus_{i}^{} E_{h_i}(\mu_j) \oplus (\bigoplus_{k}^{} L_{m_k}), (j,i) \neq (1,q)$, $k \neq p \text{ and } X = $ \small{$ 
\begin{bmatrix} 
  0 & K\\
  -K^T & 0
\end{bmatrix}$}. Similarly to Case 1, $\widetilde P_1$, $\widetilde P_2$, constructed from the respective blocks $W_1$, $W_2$ can be transformed using some permutation matrices $Y_{1}$ and $Y_{2}$, respectively, such that 
$Y_1^{T}\widetilde P_{1}Y_1 = Y_1^{T}N^{T}Y_2(Y_2^{T}\widetilde P_{2}Y_2 + Y_2^{T}FY_2)Y_2^{T}NY_1 $, or, equivalently,
$$\hat{ P_{1}} = (Y_2^{T}NY_1)^{T}(\hat{P_{2}} + F')(Y_2^{T}NY_1), \text{ where } F' = Y_2^{T}FY_2,$$ $$\hat{ P_{1}} = M_{m_p+1} \oplus H_{h_q}(\mu_1) \oplus X, \text{ and } \hat{P_{2}} = M_{m_p} \oplus H_{h_q+1}(\mu_1) \oplus X.$$ Again, similarly to Case 1, the required structure transition from $P_1$ to $P_2$ is of the type
$$M_{m_p+1} \oplus H_{h_q}(\mu) \rightsquigarrow M_{m_p} \oplus H_{h_q+1}(\mu),$$ where $m_p,h_q=0,1,2,...$ and $\mu \in \overline{\mathbb C}$, such that $h_q$ and $m_p+1$ are the sizes of largest $H$-block and $M$-block of $P_1$ respectively, provided that $r_{m_p+1}(P_1) = l_{m_p+1}(P_1) = 1$.

\textbf{Case 4}: \hide{nrk($W_2$) $\neq$ nrk($W_1$), i.e., normal rank changes.}$W_2$ is obtained from $W_1$ using {\it{J6}} from Theorem \ref{struc-equi}:
\begin{equation}\label{expression_4}
    \bigoplus_{k=1}^tE_{h_k}(\mu_k) \rightsquigarrow L_{m_p} \oplus L_{m_q}^T, \mu_i \neq \mu_j
\end{equation} $m_p \ge m_l$, $m_q \ge m_k$, for all existing $L_{m_{l}}$ and $L^{T}_{m_{k}}$ in $P_1$; $t$ is total number of eigenvalues and each $h_{k}$ is the size of the largest $E$-block in $P_1$ corresponding to $\mu_k \in \overline{\mathbb C}$; $\sum_{k=1}^t h_{k} = m_p + m_q + 1$. The block $W_1$ can be rewritten as  $$(\bigoplus_{k=1}^tE_{h_k}(\mu_k)) \oplus (\bigoplus_{j}^{}
\bigoplus_{i}^{}
E_{h_i}(\mu_j))
 \oplus
(\bigoplus_{l}^{}
L_{m_l}).$$
After applying \eqref{expression_4} to $W_1$, we obtain the block $W_2$, that can be rewritten as 
$$(L_{m_p} \oplus L_{m_q}^T) \oplus (\bigoplus_{j}^{}
\bigoplus_{i}^{}
E_{h_i}(\mu_j))
 \oplus
(\bigoplus_{l}^{}
L_{m_l}),  \text{ } i \neq k.$$
Let K = $\bigoplus_{j}^{} \bigoplus_{i}^{} E_{h_i}(\mu_j) \oplus (\bigoplus_{l}^{} L_{m_l})$, $i \neq k$ and X = 
\small{$\begin{bmatrix} 
  0 & K\\
  -K^T & 0
\end{bmatrix}$}. As in Case~1, $\widetilde P_1$, $\widetilde P_2$, constructed from the respective blocks $W_1$, $W_2$ can be transformed using some permutation matrices $Y_{1}$ and $Y_{2}$, respectively, such that $Y_1^{T}\widetilde P_{1}Y_1 = Y_1^{T}N^{T}Y_2(Y_2^{T}\widetilde P_{2}Y_2 + Y_2^{T}FY_2)Y_2^{T}NY_1 $, or, equivalently,
$\hat{ P_{1}} = (Y_2^{T}NY_1)^{T}(\hat{P_{2}} + F')(Y_2^{T}NY_1),\text{ where }$ $$F' = Y_2^{T}FY_2, \text{ }\hat{ P_{1}} = \bigoplus_{k=1}^t H_{h_k}(\mu_k) \oplus X \text{ and } \hat{P_{2}} = M_{m_p} \oplus M_{m_q} \oplus X. $$ We now show that $|m_p-m_q| \le 1$. Assume, $|m_p-m_q| > 1$, then we apply {\it{Rule 1}} as many times as required and get $M_{m_p} \oplus M_{m_q} \oplus X$, where $|m_p-m_q| \le 1$. Hence, the structure transition from $P_1$ to $P_2$ is
$$
\bigoplus_{k=1}^tH_{h_k}(\mu_k) \rightsquigarrow M_{m_p} \oplus M_{m_q},$$ where $m_p \ge m_r$, $m_q \ge m_r$, for all existing $M_{m_r}$ in $P_1$; $m_p+m_q+1 = \sum_{k=1}^t h_k, \text{ } h_k$ is the size of the largest $H$-block in $P_1$ corresponding to each distinct $\mu_k \in \overline{\mathbb C};$ $|m_p-m_q| \le 1$.

For proving the converse of the theorem, we first prove that no rule can replace any finite sequence of {\it{Rules 1-4}}. Assume $\{a,b,c,d\} = \{1,2,3,4\}$ and there exist a finite sequence of rules that gives $P_2$ from $P_1$ using, for e.g., {\it{Rule b}},  $$Rule\text { } a \xrightarrow{} ... \xrightarrow{} Rule\text { } b \xrightarrow{} ... \xrightarrow{}  Rule\text { } c \xrightarrow{} ... \xrightarrow{} Rule\text { } d \iff Rule\text { } b.$$ Let {\it{Rule b}} be the $k^{th}$ element of the sequence. We can replace {\it{Rule b}} at the $k^{th}$ position with the same sequence any number of times which implies existence of an arbitrary long downward path. The latter is not possible because stratification graph has finite number of vertices and edges. So, any sequence of rules that replaces {\it{Rule i}}, $i \in \{1,2,3,4\}$, does not contain {\it{Rule i}}. Now, while navigating from $P_1$ to $P_2$ in the graph, either normal rank changes or it remains same. Since {\it{Rules 1-3}} do not change the normal rank, {\it{Rule 4}} can not be a part of any sequence that replaces any of {\it{Rules 1-3}}. We now explain {\it{Rule i}} cannot replace any sequence of other rules (not containing {\it{Rule i}}), separately for every rule.\\
{\it{Rule 1}}: Assume {\it{Rule 1}} can be replaced by a sequence of {\it{Rules 2-3}}. This is clearly not possible because use of {\it{Rule 3}} decreases the size of the only existing largest $L$- and $L^T$- blocks in $P_1$ which does not lead us to the integer partition $\mathcal{R}(P_2)$ that we get after using {\it{Rule 1}}.\\
{\it{Rule 2}}: This case can be argued similar to that of {\it{Rule 1}} but considering $\mathcal{J}_\mu$, for some $\mu \in \overline{\mathbb{C}}$, instead of $\mathcal{R}$.\\
{\it{Rule 3}}: Assume {\it{Rule 3}} can be replaced by a sequence of {\it{Rules 1-2}}. As mentioned in the case of {\it{Rule 1}}, {\it{Rule 3}} decreases the size of the only existing largest $L$- and $L^T$-blocks in $P_1$ but {\it{Rule 1}} will always increase the size of the larger $L$- and $L^T$-block at every step.\\
{\it{Rule 4}}: No sequence of {\it{Rules 1-3}} can replace {\it{Rule 4}} because {\it{Rule 4}} changes normal rank.

Finally, we prove the converse of our theorem. Assume there exist a skew-symmetric matrix pencil $P{\text{´}}$, $P_1 \neq P{\text{´}}$ and $P{\text{´}}\neq P_2$ such that $\orb_{P{\text{´}}}^{c}$ covers $\orb_{P_2}^{c}$, and is covered by $\orb_{P_1}^{c}$. By forward implication, $P{\text{´}}$ can be obtained from $P_1$, and $P_2$ can be obtained from $P{\text{´}}$ using one of {\it{Rules 1-4}} contradicting that $P_2$ can be obtained from $P_1$ using one of {\it{Rules 1-4}}.
\end{proof}
\begin{remark}\label{diff-rowindices}
Let $P_1$ and $P_2$ be two skew-symmetric matrix pencils such that $\orb_{P_1}^c$ covers $\orb_{P_2}^c$, then either $r_0(P_2) - r_0(P_1)$ is equal to $2$ or to $0$. The converse of the statement is not true because a sequence of transitions of the first three types from Theorem \ref{congruence} can result in $r_0(P_2) - r_0(P_1) = 0$ but $\orb_{P_1}^c$ does not cover $\orb_{P_2}^c$ and a sequence of structure transition of all the types from Theorem~ \ref{congruence} in which transition 4 is taken once will result in $r_0(P_2) - r_0(P_1) = 2$, but again $\orb_{P_1}^c$ does not cover $\orb_{P_2}^c$.
\end{remark}

Dropping the restrictions on the indices of the canonical blocks in Theorem \ref{congruence}, we get the following result, analogous to Theorem \ref{equi-struc-trans}, but for skew-symmetric matrix pencils under congruence instead of strict equivalence. 

\begin{theorem}\label{SSMP_struc_trans}
Let $P_1$ and $P_2$ be two skew-symmetric matrix pencils. Then $\orb_{P_2}^c \subset \overline{\orb_{P_1}^c}$ if and only if $P_2$ can be obtained from $P_1$ after changing the canonical blocks of $P_1$ by applying a sequence of structure transitions and each transition is one of the four types below: 
\begin{itemize}  
\item $M_{j} \oplus M_{k} \rightsquigarrow M_{j-1} \oplus M_{k+1}$, $1\le j \le k;$ 
\item $H_{j-1}(\mu) \oplus H_{k+1}(\mu) \rightsquigarrow H_{j}(\mu) \oplus H_{k}(\mu)$, $1\le j \le k$ and $\mu \in \overline{\mathbb C};$
\item $M_{j+1} \oplus H_{k}(\mu) \rightsquigarrow M_{j} \oplus H_{k+1}(\mu)$, $j, \space k=0,1,2, \dots$ and $\mu \in \overline{\mathbb C};$ 
\item $\bigoplus_{i=1}^tH_{k_i}(\mu_i) \rightsquigarrow M_{p} \oplus M_{q}$, if $p+q+1= \sum_{i=1}^t k_i$ and $\mu_i \neq \mu_{i'}$ for $i \neq i', \mu_i \in \overline{\mathbb C}.$ 
\end{itemize}
\end{theorem}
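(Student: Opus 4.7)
The plan is to derive Theorem \ref{SSMP_struc_trans} as an immediate consequence of Theorem \ref{congruence}, using transitivity of orbit-closure containment and the finiteness of the stratification graph of $n\times n$ skew-symmetric matrix pencils.

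For the \emph{only if} direction, assume $\orb_{P_2}^c \subset \overline{\orb_{P_1}^c}$. If the two orbits coincide, the empty sequence suffices. Otherwise, since for fixed $n$ there are only finitely many congruence orbits (the canonical block data of Theorem \ref{lkh} has bounded size), the strict containment can be refined into a finite chain $\orb_{P_1}^c = \orb_{Q_0}^c, \orb_{Q_1}^c, \ldots, \orb_{Q_m}^c = \orb_{P_2}^c$ in which each orbit covers the next. Applying Theorem \ref{congruence} to every cover step writes $Q_{i+1}$ as the result of applying one of Rules 1--4 to $Q_i$. These rules are exactly the four transitions of the present theorem, only with additional constraints (minimality of $k-j$, or largest-size and $r_{j+1}=1$ conditions, or $|x-y|\le 1$); dropping those constraints gives precisely the four transitions listed here.

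For the \emph{if} direction, by transitivity of $\subset\overline{\cdot}$ it suffices to treat a single transition from the four listed types. I would lift each such transition to the underlying $p\times(n-p)$ block $W$ furnished by the row--column permutation of Lemma \ref{SSMP-nei-MP-nei}: the four skew-symmetric transitions correspond respectively to the transitions G1, G5, G3, G6 on $W$ in Theorem \ref{equi-struc-trans}. Each of these G-transitions yields $\orb_{W_2}^e \subset \overline{\orb_{W_1}^e}$ via explicit entrywise-small perturbations (the same perturbations $E$ that appear in equation \eqref{main_eq} in the proof of Theorem \ref{congruence}); then \cite[Lemma 3.8]{DmKa14} transfers this containment to the congruence orbits, giving $\orb_{P_2}^c \subset \overline{\orb_{P_1}^c}$.

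The main point to verify is that the permissive conditions of the present theorem (no minimality constraint, no largest-block restrictions, no $|x-y|\le 1$) are exactly matched by the conditions of G1, G5, G3, G6 on the $W$-side, which only require $1\le j\le k$ together with eigenvalue distinctness in G6. Because the translation in Lemma \ref{SSMP-nei-MP-nei} preserves this data, no further decomposition into cover moves is required, and the theorem follows directly.
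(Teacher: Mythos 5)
Your proposal is correct and follows essentially the same route as the paper, which obtains Theorem \ref{SSMP_struc_trans} directly from Theorem \ref{congruence} by dropping the index restrictions; your refinement of a general closure containment into a finite chain of covers, and your lifting of each single transition to the $W$-block via Lemma \ref{SSMP-nei-MP-nei} together with Theorem \ref{equi-struc-trans} and \cite[Lemma~3.8]{DmKa14}, just makes explicit the details the paper leaves implicit.
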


Define a \textit{vertical pair of coins} to be a pair of coins positioned in the same column of the table and in the rows $1+2k$ and $2+2k$, for $k=0,1,\dots$ Figure~\ref{fig:Vertical} illustrates the meaning. 
\begin{figure}[h]
\begin{center}
    \begin{tikzpicture} 
    \draw[color=blue] (0,0) circle (0.1cm);  
    \draw[color=blue] (0.35,0) circle (0.1cm);  
    \draw[color=blue] (0.7,0) circle (0.1cm);
    \draw[color=blue] (1.05,0) circle (0.1cm);
    \draw[color=blue] (0.95,0) -- (1.15,0);
    \draw[color=blue] (1.05,-0.1) -- (1.05,0.1);
    \draw[color=white] (1.3,0) -- (1.5,0);
    \draw[color=white] (1.4,-0.1) -- (1.4,0.1);
    \end{tikzpicture}

    \begin{tikzpicture} 
    \draw[color=blue] (0,0) circle (0.1cm);  
    \draw[color=blue] (0.35,0) circle (0.1cm);  
    \draw[color=blue] (0.7,0) circle (0.1cm);
    \draw[color=blue] (1.05,0) circle (0.1cm);
    \draw[color=blue] (0.95,0) -- (1.15,0);
    \draw[color=blue] (1.05,-0.1) -- (1.05,0.1);
    \draw[color=white] (1.3,0) -- (1.5,0);
    \draw[color=white] (1.4,-0.1) -- (1.4,0.1);
    \end{tikzpicture}
    
    \begin{tikzpicture} 
    \draw[color=blue] (0,0) circle (0.1cm);  
    \draw[color=blue] (0.35,0) circle (0.1cm);  
    \draw[color=blue] (0.7,0) circle (0.1cm);
    \draw[color=blue] (1.05,0) circle (0.1cm);
    \draw[color=blue] (1.4,0) circle (0.1cm);
    \end{tikzpicture}

    \begin{tikzpicture} 
    \draw[color=blue] (0,0) circle (0.1cm);  
    \draw[color=blue] (0.35,0) circle (0.1cm);  
    \draw[color=blue] (0.7,0) circle (0.1cm);
    \draw[color=blue] (1.05,0) circle (0.1cm);
    \draw[color=blue] (1.4,0) circle (0.1cm);
    \end{tikzpicture}
\end{center}
\caption{Integer partition, $\mathcal{J} = \{4,4,4,4,2\}$, 
with vertical pair of coins marked as \coin.
}
\label{fig:Vertical}
\end{figure}
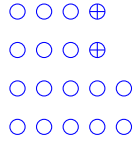


We present Theorem \ref{coin-congruence} which is an interpretation of Theorem \ref{congruence} in terms of coin moves.

\begin{theorem}\label{coin-congruence}
Let $P_1$ and $P_2$ be two skew-symmetric matrix pencils. Then $\orb_{P_1}^c$ covers $\orb_{P_2}^c$ if and only if the integer partitions, i.e., ${\cal R}$, ${\cal L}$ and ${\cal J}_{\mu_{i}}$, corresponding to $P_2$ can be obtained by applying one of the four rules to the integer partitions of $P_1$:
\begin{itemize}
    \item{Rule I:} Minimum rightward coin move from column $j$ in ${\cal R}$ and the same coin move in ${\cal L}$, where $j\ge 1$;
    \item{Rule II:} Minimum leftward coin move of a vertical pair of coins in any ${\cal J}_{\mu_i}$;
    \item{Rule III:} If the rightmost column in each of ${\cal R}$ and ${\cal L}$ is a single coin, append these two coins (together) as a new rightmost column of some ${\cal J}_{\mu_i}$;
    \item{Rule IV:} Let $2k = (\sum_{i=1}^t 2k_i$), where $k_i$ denotes the total number of coins in each of the two longest ($=$lowest) row from each ${\cal J}_{\mu_i}$. Remove these $2k$ coins add two more coins to the set. The following two step distribution is done:
    \begin{itemize}
        \item{} Distribute $k+1$ coins from the set of $2k+2$ coins to $r_p$, $p = 0,\dots,t$ and $l_q$, $q = 0,\dots,k-t-1$,
        \item{} Distribute the remaining $k+1$ coins from the set to $r_p$, $p = 0,\dots,k-t-1$ and $l_q$, $q = 0,\dots,t$,
    \end{itemize} such that at least the existing columns of each of ${\cal R}(P_1)$ and ${\cal L}(P_1)$ receive $2$ coins and $|p-q|$ is the minimum of all possible differences.
\end{itemize}
\end{theorem}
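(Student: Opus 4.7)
The plan is to prove Theorem~\ref{coin-congruence} as a direct translation of Theorem~\ref{congruence} from the language of structure transitions on canonical blocks into the language of coin moves on the integer partitions $\mathcal{R}$, $\mathcal{L}$, and $\mathcal{J}_{\mu_i}$. Since Theorem~\ref{congruence} already provides a necessary and sufficient condition for $\orb_{P_1}^c$ to cover $\orb_{P_2}^c$, it suffices to establish that Rules~1--4 there are equivalent to Rules~I--IV here.

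The translation rests on two bookkeeping facts. First, an $M_m$-block contributes a single coin to column $m$ of both $\mathcal{R}$ and $\mathcal{L}$, and since $\mathcal{R}(P)=\mathcal{L}(P)$ for skew-symmetric $P$ by Lemma~\ref{canbeskew}, any coin move in $\mathcal{R}$ is automatically mirrored in $\mathcal{L}$. Second, an $H_h(\mu)$-block contains two Jordan blocks of size $h$ and therefore places a vertical pair of coins in each column $1,\ldots,h$ of $\mathcal{J}_\mu$; consistency with Lemma~\ref{jordaneven} forces all moves within $\mathcal{J}_\mu$ to shift vertical pairs together. Under this dictionary, Rule~1 corresponds to Rule~I (a single coin moves from column $j$ to column $k+1$ in $\mathcal{R}$ and simultaneously in $\mathcal{L}$, with the minimality of $k-j$ matching the minimum rightward coin move), Rule~2 corresponds to Rule~II (two coins move from column $k+1$ to column $j$ of $\mathcal{J}_\mu$, the parity constraint forcing them to form a vertical pair), and Rule~3 corresponds to Rule~III (the hypothesis $r_{j+1}(P_1)=l_{j+1}(P_1)=1$ says the rightmost column of each of $\mathcal{R}$ and $\mathcal{L}$ is a single coin, which is peeled off and attached to $\mathcal{J}_\mu$ as a new rightmost vertical pair, corresponding to $H_k\rightsquigarrow H_{k+1}$).

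The main obstacle is the equivalence of Rule~4 and Rule~IV. Here, removing the longest Jordan block vertical pair from each $\mathcal{J}_{\mu_i}$ contributes $2k_i$ coins, totaling $2k=\sum_{i=1}^t 2k_i$ coins removed; creating the pair $M_x\oplus M_y$ with $x+y+1=\sum k_i=k$ then inserts $x+1$ coins into $\mathcal{R}$ from $M_x$, $y+1$ coins from $M_y$, and likewise in $\mathcal{L}$, for a total of $2k+2$ coins, which accounts for the ``add two more'' bookkeeping of Rule~IV. Writing $x=t$ and $y=k-t-1$ reproduces the two-batch distribution exactly: positions $0,\ldots,k-t-1$ of both $\mathcal{R}$ and $\mathcal{L}$ receive two coins each (one from each $M$-block), while positions $k-t,\ldots,t$ receive one coin, matching the symmetric pattern produced by $M_x\oplus M_y$. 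The balance condition $|x-y|\le 1$ in Rule~4 is then equivalent to minimizing $|p-q|$ in Rule~IV, and the condition $x,y\ge z$ for every existing $M_z$-summand is equivalent to requiring that each existing non-zero column of $\mathcal{R}(P_1)$ and $\mathcal{L}(P_1)$ receives two new coins. Carefully verifying these matching conditions --- particularly the two-step distribution and the $|x-y|\le 1$ balance --- will be the principal technical step of the proof.
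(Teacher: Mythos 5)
Your proposal is correct and follows essentially the same route as the paper: both derive Rules I--IV by taking Theorem~\ref{congruence} as given and tracking how each structure transition Rule~1--4 changes the Weyr characteristics $\mathcal{R}$, $\mathcal{L}$, $\mathcal{J}_{\mu_i}$, with the same $2k+2$ coin bookkeeping for Rule~IV and the same reduction of the converse to the argument of Theorem~\ref{congruence}. One small imprecision: an $M_m$-block contributes a coin to \emph{every} column $0,\ldots,m$ of $\mathcal{R}$ and of $\mathcal{L}$ (not just to column $m$), which is what makes $M_j\oplus M_k\rightsquigarrow M_{j-1}\oplus M_{k+1}$ a single coin move from column $j$ to column $k+1$; your later description is consistent with this, so it is only a phrasing slip.
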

\begin{proof} Assume $\orb_{P_1}^c$ covers $\orb_{P_2}^c$. {\it{Rules 1-4}} from Theorem \ref{congruence} are the four possible rules for structure transitions in the canonical blocks of $P_1$ that allow us to obtain $P_2$. We prove that {\it{Rules 1-4}}, from Theorem \ref{congruence}, imply {\it{Rules I-IV}} respectively.

By {\it{Rule 1}}, Theorem \ref{congruence}, $P_2$ can be obtained from $P_1$ by applying $M_{j} \oplus M_{k} \rightsquigarrow M_{j-1} \oplus M_{k+1}$, $1\le j\le k$ such that $k - j = \min\big\{\{k_1-j,k_2-j,\dots\}\cup \{k-j_1,k-j_2,\dots\}\big\}$, $j_t\le k$ and $j \le k_u $. Let, $\mathcal{R}(P_1) = (r_0,\dots,r_{j-1},r_{j},r_{j+1},\dots,r_{k},r_{k+1},r_{k+2},...)$. The following are the changes in the integer partitions $\mathcal{R}$, $\mathcal{L}$, and $\mathcal{J}$, after the structure transition: \\
We start with $\mathcal{R}$. Note that by definition of Weyr characteristics, $r_{j-1}$ is the total number of $L$-blocks with indices greater than or equal to $j-1$. So the replacement of block $M_j$ by $M_{j-1}$ does not change the value of $r_{j-1}$, i.e., $r_{j-1}(P_1) = r_{j-1}(P_2)$, but $r_j$ decreases by $1$, i.e., $r_{j}(P_2) = r_{j}(P_1)-1$. Similar arguments can be used to explain $r_{k}(P_1) = r_{k}(P_2)$ but $r_{k+1}(P_2) = r_{k+1}(P_1)+1$. All other {\it{parts}} of the integer partition $\mathcal{R}$ remain the same. Thus, $\mathcal{R}(P_2) = (r_0,\dots,r_{j-1},r_{j}-1,r_{j+1},\dots,r_{k},r_{k+1}+1,r_{k+2},...)$. 

Now, we show that $\mathcal{R}(P_2)$ is an integer partition. We consider two cases, namely, $k>j$ and $k=j$. In the first case, the the condition $k - j = \min\big\{\{k_1-j,k_2-j,\dots\}\cup \{k-j_1,k-j_2,\dots\}\big\}$, $j_t< k$ and $j < k_u $ implies 
\begin{equation}\label{inequality_of_indices}
\dots \ge r_{j-1} \ge r_{j} > r_{j+1}=...=r_{k-1} = r_{k} > r_{k+1} \ge r_{k+2} \ge \dots
\end{equation} Note, if $r_{k} = r_{k+1}$ , then there exists no $M_k$ block.\hide{Similar argument implies $r_{j} > r_{j+1}$ and $r_{j+1}=...=r_{k-1} = r_{k}$.} After applying {\it Rule 1} to \eqref{inequality_of_indices}, we get,
$$ \dots \ge r_{j-1} > r_{j} -1 \ge r_{j+1}=...=r_{k-1} = r_{k} \ge r_{k+1} +1 > r_{k+2}\ge \dots$$
Now, if $k=j$ then at least two $M_k$-blocks exist, i.e., 
\begin{equation}\label{equality_of_indices}
\dots \ge r_{j-1} \ge r_{j} = r_{k} > r_{k+1} \ge r_{k+2} \ge \dots, \text{ where } r_k-r_{k+1} \ge 2.
\end{equation} Again applying {\it Rule 1} to \eqref{equality_of_indices}, we get,
$$ \dots \ge r_{j-1} > r_{j} -1 = r_{k}-1 \ge r_{k+1} +1 > r_{k+2}\ge \dots$$
In both the scenerios, $\mathcal{R}(P_2)$ remains to be an integer partition. $\mathcal{L}(P_2)$ has the same partition as that of $\mathcal{R}(P_2)$ and $\mathcal{J}_\mu(P_2) = \mathcal{J}_\mu(P_1)$, for all $\mu \in \overline{\mathbb{C}}$.  Note, by Lemma \ref{canbeskew}, the changes in the integer partitions do not violate the assumed structure of $P_2$.

The above changes in the integer partitions is equivalent to a rightward  coin move in $\mathcal{R}$ from column $j$ to $k+1$ and the same move in $\mathcal{L}$ such that $j \ge 1$. The condition $k - j = \min\big\{\{k_1-j,k_2-j,\dots\}\cup \{k-j_1,k-j_2,\dots\}\big\}$, $j_t\le k$ and $j \le k_u $, makes the rightward coin move a minimum rightward coin move.

Forward implication of {\it{Rules 2}} and {\it{3}} can be stated, respectively, in terms of coin move as {\it{Rules II}} and {\it{III}} using arguments similar to that of {\it{Rule~1}}.
The application of {\it{Rule 2}} to the canonical blocks of $P_1$ yields the following sequences of integers associated with $P_2$: 
\begin{itemize}
    \item $\mathcal{J}_\mu(P_2) = (j_1,\dots,j_{m-1},j_{m}+2,j_{m+1},\dots,j_{k},j_{k+1}-2,j_{k+2}\dots)$, for some $\mu \in \overline{\mathbb{C}}$, where $j_l = j_l(P_1)$.
    \item $\mathcal{R}(P_2) = \mathcal{R}(P_1) = \mathcal{L}(P_1) = \mathcal{L}(P_2)$.
\end{itemize}
Again, application of {\it{Rule 3}} to the canonical blocks of $P_1$ results in the following sequences of integers associated with $P_2$:
\begin{itemize}
    \item $\mathcal{J}_\mu(P_2) = (j_1(P_1),\dots,j_{k}(P_1),2)$, for some $\mu \in \overline{\mathbb{C}}$
    \item $\mathcal{R}(P_2) = (r_0(P_1),\dots,r_{j}(P_1),0) = \mathcal{L}(P_2)$. Note, $r_{j+1}(P_1) = 1$ which becomes zero after the structure transition
\end{itemize} The resulting sequences of integers after the application of both {\it{Rules 2} and \it{3}} remain integer partitions. This can be understood using similar analysis as that for {\it{Rule 1}}.

We prove the forward implication of {\it{Rule 4}}, i.e., $P_2$ can be obtained from $P_1$ by applying $\bigoplus_{i=1}^dH_{k_i}(\mu_i) \rightsquigarrow M_{p} \oplus M_{q}$, $p\ge z$, $q \ge z$, for all existing $M_{z}$ in $P_1$; $p+q+1 = \sum_{i=1}^d k_i, k_i$ is the size of the largest Jordan block corresponding to each distinct $\mu_i \in \overline{\mathbb C}$ and $|p-q| \le 1$.\hide{ i.e.,
$$\bigoplus_{i=1}^t
\begin{bmatrix}
    0 & E_{k_i}(\mu_i)\\
    -E_{k_i}(\mu_i)^T & 0
\end{bmatrix} \rightsquigarrow
\begin{bmatrix}
    0 & L_{p}\\
    -L_{p}^T & 0
\end{bmatrix} \oplus
\begin{bmatrix}
    0 & L_{q}^T\\
    -L_{q} & 0
\end{bmatrix}.$$} Without loss of generality, let $p\ge q$. {\it{Rule 4}} causes the following changes in the integer partitions $\mathcal{R}(P_1)$, $\mathcal{L}(P_1)$ and, for each $\mu_i$, $\mathcal{J}_{\mu_i}(P_1)$:
\begin{itemize}
 \item The formation of $2$ new $M$-blocks of indices greater than or equal to the indices of all $M$-blocks of $P_1$. In such a situation, either of the following cases occur, for all $M_z(P_1)$,
\begin{itemize}
    \item $p>q\ge z$, i.e., $\mathcal{R}(P_2)$ = $\mathcal{L}(P_2) = (r_0 + 2,\dots,r_{q} + 2, r_{p} + 1),$
    \item $p= q\ge z$, i.e., $\mathcal{R}(P_2)$ = $\mathcal{L}(P_2) = (r_0 + 2,\dots,r_{p} + 2).$
\end{itemize} 

\item The number of Jordan blocks of size greater or equal to $x \le k_i$ decreases by $2$, i.e., for each $\mu_i$, $\mathcal{J}_{\mu_i}(P_2) = (j_1(P_1)-2,\dots,j_{k_i-1}(P_1)-2, j_{k_i}(P_1)-2),$ since $k_i$ is the size of the largest Jordan block corresponding to each distinct $\mu_i \in \overline{\mathbb C}$. 
\end{itemize} 

It is clear, for each $\mu_i$, $\mathcal{J}_{\mu_i}(P_2)$, $\mathcal{R}(P_2)$ and $\mathcal{L}(P_2)$ are integer partitions and the skew-symmetry of $P_2$ is preserved.
    
Thus, we get the above integer partitions corresponding to $P_2$ from integer partitions corresponding to $P_1$ by removal of two longest($=$lowest) row of coins from the integer partition corresponding to Jordan blocks  of $P_1$ for all eigenvalues. Assume a total of $2k$ coins have been removed, add $2$ more to this set of $2k$ coins. Firstly, distribute $k+1$ coins to $r_p$, $p = 0,\dots,t$ and $l_q$, $q = 0,\dots,k-t-1$ such that all (existing) columns of $\mathcal{R}(P_1)$ and $\mathcal{L}(P_1)$ get at least $1$ coin. Finally, distribute the remaining $k+1$ coins to $r_p$, $p = 0,\dots,k-t-1$ and $l_q$, $q = 0,\dots,t$.

The backward implication of the theorem is similar to that of Theorem~\ref{congruence}, except the fact that, here, we consider {\it Rules I-IV} instead {\it Rules 1-4}.
\end{proof}
An illustration of each coin move is given in Example \ref{orbit_ex}. From Theorem \ref{congruence}, we derive the following theorem that presents rules for upward navigation in the orbit stratification graph. Note that Theorem \ref{congruence} had rules for downward navigation in the graph.
\begin{theorem}\label{co-for_go_up}
Let $P_1$ and $P_2$ be two skew-symmetric matrix pencils. Then $\orb_{P_1}^c$ covers $\orb_{P_2}^c$ if and only if $P_1$ can be obtained from $P_2$ by applying one of four types of structure transitions:
\begin{itemize}
    \item{Type 1:} $M_{j-1} \oplus M_{k+1} \rightsquigarrow M_{j} \oplus M_{k}$, $1\le j \le k$, such that $k - j = \min\big\{\{k_1-j,k_2-j,\dots\}\cup \{k-j_1,k-j_2,\dots\}\big\}$, $j_t\le k$ and $j \le k_u $;
    \item{Type 2:} $H_{j}(\mu) \oplus H_{k}(\mu) \rightsquigarrow H_{j-1}(\mu) \oplus H_{k+1}(\mu)$, $1\le j \le k$ and $\mu \in \overline{\mathbb C}$, such that $k - j = \min\big\{\{k_1-j,k_2-j,\dots\}\cup \{k-j_1,k-j_2,\dots\}\big\}$, $j_t\le k$ and $j \le k_u $;
    \item{Type 3:} $M_{j} \oplus H_{k+1}(\mu) \rightsquigarrow M_{j+1} \oplus H_{k}(\mu)$, $j,k=0,1,2,...$ and $\mu \in \overline{\mathbb C}$, such that $k+1$ and $j$ are the sizes of the largest $H$-block and $M$-block respectively, provided that $j_{k+1}^\mu(P_1) = 2$;
    \item{Type 4:} $M_{p} \oplus M_{q} \rightsquigarrow \bigoplus_{i=1}^dH_{k_i}(\mu_i)$, if $p+q+1 = \sum_{i=1}^d k_i,$ $p$ and $q$ are the sizes of the largest $M$-blocks in $P_2$ and $k_i$ is the the size of the largest Jordan blocks corresponding to each distinct $\mu_i \in \overline{\mathbb C}$ in $P_1$. 
\end{itemize}  
\end{theorem}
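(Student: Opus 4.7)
The plan is to derive Theorem~\ref{co-for_go_up} directly from Theorem~\ref{congruence} by reading each of Rules~1--4 in reverse. The relation $\orb_{P_1}^c$ covers $\orb_{P_2}^c$ is a single assertion about a pair of pencils; Theorem~\ref{congruence} characterizes it by describing how $P_2$ is produced from $P_1$, while Theorem~\ref{co-for_go_up} characterizes it by describing how $P_1$ is produced from $P_2$. Since both viewpoints describe the same cover relation, verifying the theorem reduces to checking that Type~$i$ is precisely the formal inverse of Rule~$i$ for each $i \in \{1,2,3,4\}$.

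For each $i$, I would swap the two sides of the symbol $\rightsquigarrow$ in Rule~$i$ to produce Type~$i$ as a transition from $P_2$ to $P_1$, and then verify that the side conditions translate correctly. For Types~1, 2, and~4, the minimality constraints $k-j = \min\{\ldots\}$ and the extremality and normal-rank conditions are expressed in terms of data (indices of canonical blocks, eigenvalue pairings, sums of indices) shared by $P_1$ and $P_2$ outside the affected blocks and bijectively matched on the affected ones. Hence the same condition describes the transition in either direction. Both implications of the theorem then follow: given a cover, apply Theorem~\ref{congruence}, take the formal reverse of the resulting Rule, and read off a Type; conversely, given a Type~$i$ transition from $P_2$ to $P_1$, reverse it into a Rule~$i$ transition from $P_1$ to $P_2$ and invoke Theorem~\ref{congruence}.

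The main delicate step is the translation in Type~3. Rule~3 asserts that the $M_{j+1}$-block of $P_1$ is unique and maximal (via $r_{j+1}(P_1) = l_{j+1}(P_1) = 1$ together with $j+1$ being the size of the largest $M$-block), and that $H_k(\mu)$ is the largest $H$-block for $\mu$ in $P_1$. Under the downward transition $M_{j+1} \oplus H_k(\mu) \rightsquigarrow M_j \oplus H_{k+1}(\mu)$, this uniqueness rephrases, after swapping the roles of $P_1$ and $P_2$, into the Weyr-characteristic condition $j_{k+1}^{\mu}(P_1)=2$ appearing in Type~3, once one recalls via Lemma~\ref{jordaneven} that each $H$-block contributes exactly two Jordan blocks of the same size. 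Making this counting precise — and checking that no extra $H$- or $M$-blocks could enlarge or disturb the relevant Weyr entries — is the one nontrivial computation; the remaining translations are routine.
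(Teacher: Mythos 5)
Your approach matches the paper's: Theorem~\ref{co-for_go_up} is presented there as a direct consequence of Theorem~\ref{congruence}, obtained exactly as you propose by reading Rules~1--4 in reverse and re-expressing the side conditions in terms of the other pencil, with no further argument supplied. Your extra attention to Type~3, where the uniqueness of the largest $M$- and $H$-blocks translates (via each $H$-block contributing a pair of equal Jordan blocks) into the Weyr-characteristic condition $j_{k+1}^{\mu}=2$, is consistent with the intended reading and is indeed the only point needing care.
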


Following \cite{Dmyt16, Dmyt17, DmKa14, DmKS13}, define dimension of $\orb_{A - \lambda B}^c$ as the dimension of the tangent space to $\orb_{A - \lambda B}^c$ at $A - \lambda B$. The {\it codimension} of $\orb_{A - \lambda B}^c$ is $n^2-n$ minus the dimension of the $\orb_{A - \lambda B}^c$. From now onwards, codimension will be denoted as {\it{cod}}. Below, we give two examples demonstrating the main result of the paper.
\begin{example}\label{orbit_ex}  In this example, all the orbits are considered under congruence transformation. In Figure \ref{fig:Orbit_Ex}, we present the orbit stratification graph of $6\times6$ skew-symmetric matrix pencils, where at each edge of the graph we write the rule it was obtained from (one of {\it Rules I-IV} in Theorem \ref{coin-congruence}). We choose $4$ distinct rules from the figure, written in bold font, and explain the coin moves corresponding to each of them. The bullets (\begin{tikzpicture} 
        \filldraw[color=blue] (0,0) circle (0.1cm); 
        \end{tikzpicture}) represent the coins that are involved in the coin moves and the empty circles  (\begin{tikzpicture} 
        \draw[color=blue] (0,0) circle (0.1cm); 
        \end{tikzpicture}) represent the coins that are not involved in the coin moves.
        
\noindent Coin move corresponding to {\it{Rule I}}: $2M_{1} \rightsquigarrow M_{0} \oplus M_{2}$.
\begin{center}
    \begin{tabular}{|c|c|}
    \hline
    Both $\mathcal{R} \text{ and } \mathcal{L}$ before {\it{Rule I}}: & Both $\mathcal{R} \text{ and }\mathcal{L}$ after {\it{Rule I}}: \\
    \hline
    \begin{tikzpicture} 
    \draw[color=blue] (0,0) circle (0.1cm); 
    \filldraw[color=blue] (0.35,0) circle (0.1cm);
    \end{tikzpicture}    & 
    \begin{tikzpicture} 
    \draw[color=blue] (-0.35,0) circle (0.1cm);
    \draw[color=white] (0,0) circle (0.1cm); 
    \draw[color=white] (0.35,0) circle (0.1cm);
    \end{tikzpicture}\\
    \begin{tikzpicture} 
    \draw[color=blue] (0,0) circle (0.1cm);  
    \draw[color=blue] (0.35,0) circle (0.1cm); 
    \end{tikzpicture} & 
    \begin{tikzpicture} 
    \draw[color=blue] (-0.35,0) circle (0.1cm); 
    \draw[color=blue] (0,0) circle (0.1cm); 
    \filldraw[color=blue] (0.35,0) circle (0.1cm); 
    \end{tikzpicture}\\
    \hline
    $J^{\mu_i}$ before {\it{Rule I}}, for any $\mu_i$: & $J^{\mu_i}$ after {\it{Rule I}}, for any $\mu_i$:\\
    \hline
     - & - \\
     \hline
    \end{tabular}
\end{center}
Coin move corresponding to {\it{Rule II}}: $H_3(\mu_1) \rightsquigarrow H_1(\mu_1) \oplus H_2(\mu_1)$.
    \begin{center}
        \begin{tabular}{|c|c|}
        \hline
         Both $\mathcal{R} \text{ and }\mathcal{L}$ before {\it{Rule II}}: &  Both $\mathcal{R} \text{ and }\mathcal{L}$ after {\it{Rule II}}:\\
         \hline
           -  & - \\
        \hline
        $J^{\mu_{1}}$ before {\it{Rule II}}: & $J^{\mu_{1}}$ after {\it{Rule II}}:\\
        \hline
         & 
        \begin{tikzpicture} 
        \filldraw[color=blue] (0,0) circle (0.1cm); 
        \draw[color=white] (0.35,0) circle (0.1cm);
        \end{tikzpicture}\\
         & 
        \begin{tikzpicture} 
        \filldraw[color=blue] (0,0) circle (0.1cm); 
        \draw[color=white] (0.35,0) circle (0.1cm); 
        \end{tikzpicture}\\
        \begin{tikzpicture} 
        \draw[color=blue] (0,0) circle (0.1cm);  
        \draw[color=blue] (-0.35,0) circle (0.1cm);  
        \filldraw[color=blue] (0.35,0) circle (0.1cm); 
        \end{tikzpicture} & \begin{tikzpicture} 
        \draw[color=blue] (0,0) circle (0.1cm); 
        \end{tikzpicture}
        \begin{tikzpicture} 
        \draw[color=blue] (0,0) circle (0.1cm); 
        \end{tikzpicture}\\
        \begin{tikzpicture} 
        \draw[color=blue] (0,0) circle (0.1cm);  
        \draw[color=blue] (-0.35,0) circle (0.1cm);  
        \filldraw[color=blue] (0.35,0) circle (0.1cm); 
        \end{tikzpicture} & \begin{tikzpicture} 
        \draw[color=blue] (0,0) circle (0.1cm); 
        \end{tikzpicture}
        \begin{tikzpicture} 
        \draw[color=blue] (0,0) circle (0.1cm); 
        \end{tikzpicture}\\
        \hline
        \end{tabular}
    \end{center}
Coin move corresponding to {\it{Rule III}}: $M_0 \oplus M_1 \oplus H_1(\mu_1) \rightsquigarrow 2M_0 \oplus H_2(\mu_1)$.
    \begin{center}
        \begin{tabular}{|c|c|}
        \hline
        Both $\mathcal{R} \text{ and }\mathcal{L}$ before {\it{Rule III}}: &  Both $\mathcal{R} \text{ and }\mathcal{L}$ after {\it{Rule III}}:\\
        \hline
        \begin{tikzpicture} 
        \draw[color=blue] (0,0) circle (0.1cm); 
        \end{tikzpicture}
        \begin{tikzpicture} 
        \filldraw[color=white] (0,0) circle (0.1cm); 
        \end{tikzpicture}  & 
        \begin{tikzpicture} 
        \draw[color=blue] (0,0) circle (0.1cm); 
        \end{tikzpicture}\\
        \begin{tikzpicture} 
        \draw[color=blue] (0,0) circle (0.1cm); 
        \end{tikzpicture}
        \begin{tikzpicture} 
        \filldraw[color=blue] (0,0) circle (0.1cm); 
        \end{tikzpicture}  & 
        \begin{tikzpicture} 
        \draw[color=blue] (0,0) circle (0.1cm); 
        \end{tikzpicture}\\
        \hline
        $J^{\mu_{1}}$ before {\it{Rule III}}: & $J^{\mu_{1}}$ after {\it{Rule III}}:\\
        \hline
        \begin{tikzpicture} 
        \draw[color=blue] (0,0) circle (0.1cm); 
        \end{tikzpicture} & 
        \begin{tikzpicture} 
        \draw[color=blue] (0,0) circle (0.1cm); 
        \end{tikzpicture} 
        \begin{tikzpicture} 
        \filldraw[color=blue] (0,0) circle (0.1cm); 
        \end{tikzpicture}\\
        \begin{tikzpicture} 
        \draw[color=blue] (0,0) circle (0.1cm); 
        \end{tikzpicture} & 
        \begin{tikzpicture} 
        \draw[color=blue] (0,0) circle (0.1cm); 
        \end{tikzpicture} 
        \begin{tikzpicture}
        \filldraw[color=blue] (0,0) circle (0.1cm); 
        \end{tikzpicture}\\
        \hline
        \end{tabular}
    \end{center}
Coin move corresponding to {\it{Rule IV}}: $2M_0 \oplus 2H_1(\mu_1) \rightsquigarrow 4M_0 \oplus H_1(\mu_1)$.
    \begin{center}
        \begin{tabular}{|c|c|c|}
           \hline
           Both $\mathcal{R} \text{ and }\mathcal{L}$ before {\it Rule IV} & Both $\mathcal{R} \text{ and }\mathcal{L}$ after {\it Rule IV}\\
           \hline
            &
           \begin{tikzpicture} 
           \filldraw[color=blue] (0,0) circle (0.1cm); 
           \end{tikzpicture}\\
         &
           \begin{tikzpicture} 
           \filldraw[color=blue] (0,0) circle (0.1cm); 
           \end{tikzpicture}\\
           \begin{tikzpicture} 
           \draw[color=blue] (0,0) circle (0.1cm); 
           \end{tikzpicture} &
           \begin{tikzpicture} 
           \draw[color=blue] (0,0) circle (0.1cm); 
           \end{tikzpicture}\\
           \begin{tikzpicture} 
           \draw[color=blue] (0,0) circle (0.1cm); 
           \end{tikzpicture} & 
           \begin{tikzpicture} 
           \draw[color=blue] (0,0) circle (0.1cm); 
           \end{tikzpicture}\\
           \hline
           $J^{\mu_{1}}$ before {\it Rule IV} & $J^{\mu_{1}}$ after {\it Rule IV}\\
           \hline
           \begin{tikzpicture} 
           \draw[color=blue] (0,0) circle (0.1cm); 
           \end{tikzpicture}  &
           \begin{tikzpicture} 
           \draw[color=white] (0,0) circle (0.1cm); 
           \end{tikzpicture}\\
           \begin{tikzpicture} 
           \draw[color=blue] (0,0) circle (0.1cm); 
           \end{tikzpicture}  &
           \begin{tikzpicture} 
           \draw[color=white] (0,0) circle (0.1cm); 
           \end{tikzpicture}\\
           \begin{tikzpicture} 
           \filldraw[color=blue] (0,0) circle (0.1cm); 
           \end{tikzpicture}  &
           \begin{tikzpicture} 
           \draw[color=blue] (0,0) circle (0.1cm); 
           \end{tikzpicture}\\
           \begin{tikzpicture} 
           \filldraw[color=blue] (0,0) circle (0.1cm); 
           \end{tikzpicture} & 
           \begin{tikzpicture} 
           \draw[color=blue] (0,0) circle (0.1cm); 
           \end{tikzpicture}\\
           \hline
        \end{tabular}
    \end{center}
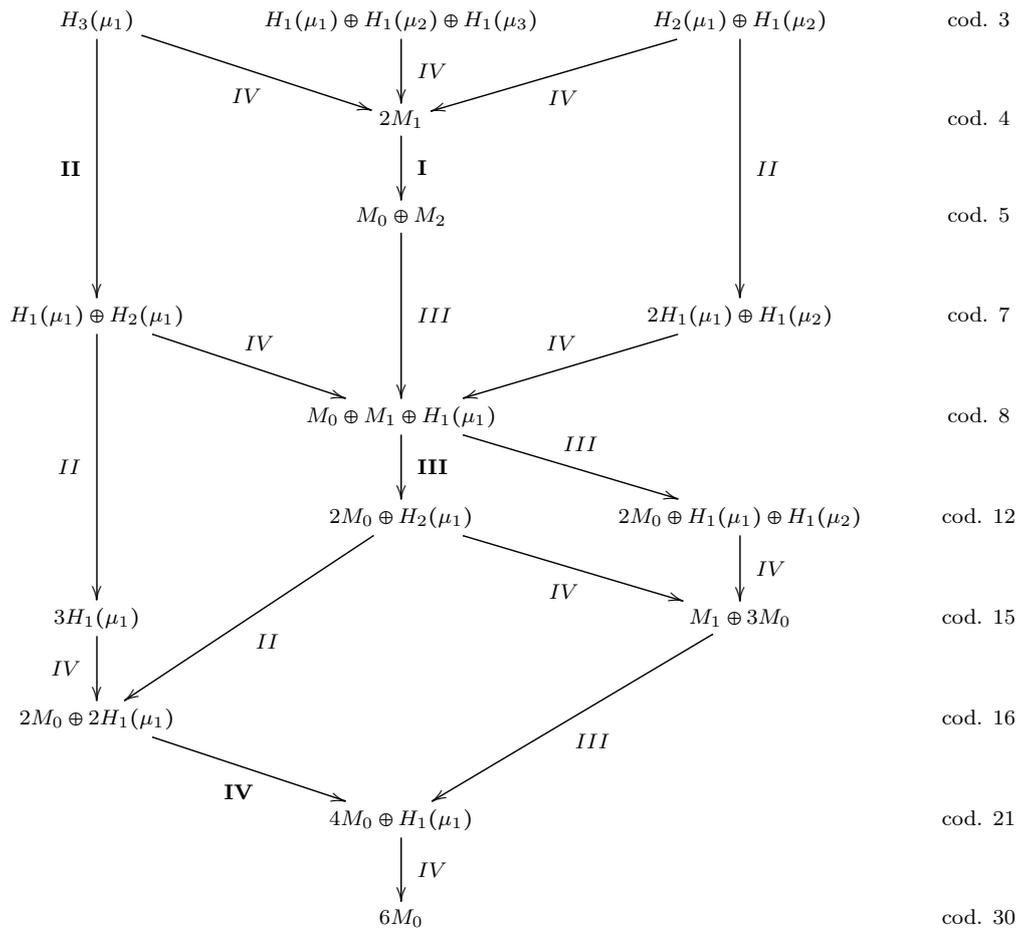
\begin{figure}
    \centering
    \begin{equation*}\label{g6cover-orbit}
    \text{\scriptsize{
    $\begin{split}
    \xymatrix{
    {H_3(\mu_1)} \ar[ddd]_-*+{\textbf{II}} \ar[dr]_-*+{IV} & { H_1(\mu_1) \oplus H_1(\mu_2) \oplus H_1(\mu_3)} \ar[d]^-*+{IV} & { H_2(\mu_1) \oplus H_1(\mu_2)} \ar[ddd]^-*+{II} \ar[dl]^-*+{IV} & {\text{cod. }3} \\
    &{2M_1} \ar[d]^-*+{\textbf{I}} & & {\text{cod. }4}\\
    &{M_0 \oplus M_2} \ar[dd]^-*+{III}  & & {\text{cod. }5}\\
    {H_1(\mu_1) \oplus  H_2(\mu_1) }\ar[ddd]_-*+{II} \ar[dr]^-*+{IV} & & {2H_1(\mu_1) \oplus  H_1(\mu_2)} \ar[dl]_-*+{IV} & {\text{cod. }7}\\
    & {  M_0 \oplus M_1 \oplus H_1(\mu_1) } \ar[d]^-*+{\textbf{III}}  \ar[dr]^-*+{III} & & {\text{cod. }8} \\
    & {  2M_0 \oplus H_2(\mu_1) } \ar[dr]_-*+{IV} \ar[ddl]^-*+{II} & {2M_0 \oplus H_1(\mu_1) \oplus H_1(\mu_2)} \ar[d]^-*+{IV} & {\text{cod. }12}\\
    {3H_1(\mu_1)}\ar[d]_-*+{IV} & & {M_1 \oplus 3M_0} \ar[ddl]^-*+{III} & {\text{cod. }15}\\
    {2M_0 \oplus 2H_1(\mu_1)} \ar[dr]_-*+{\textbf{IV}} & & & {\text{cod. }16} \\
    & {4M_0 \oplus H_1(\mu_1)} \ar[d]^-*+{IV} & & {\text{cod. }21}\\
    & {6M_0} & & {\text{cod. }30}
    }
    \end{split}$
    }}
    \end{equation*}
    \caption{Orbit stratification for $6\times 6$ skew-symmetric matrix pencils along with the codimensions. Rules from Theorem \ref{congruence} used for each structure transition are indicated near the corresponding edge.}
    \label{fig:Orbit_Ex}
\end{figure}
\end{example}
\begin{example} \label{example2}
All the orbits are considered under congruence transformation in this example. In Figure \ref{fig:Orbit_Ex_7}, we draw the orbit stratification graph of $7\times7$ skew-symmetric matrix pencil. {{\it{Rules I-IV}}} corresponding to each edge of the graph are mentioned, as well as the codimensions of each orbit.
\begin{figure}
    \centering
    \begin{equation*}\label{g7cover-orbit}
    \text{\scriptsize{
    $\begin{split}
    \xymatrix{
     & {M_3} \ar[d]_-*+{III} & &  {\text{cod. 0}}\\
     & {M_2 \oplus H_1(\mu_1)} \ar[dr]^-*+{III} \ar[dl]_-*+{III} & &  {\text{cod. 3}}\\
     {M_1 \oplus H_2(\mu_1)} \ar[dr]^-*+{III} \ar[ddr]_-*+{II} \ar[d]_-*+{III} &  & {M_1 \oplus H_1(\mu_1) \oplus H_1(\mu_2)} \ar[dl]_-*+{III} \ar[d]^-*+{III} &{\text{cod. 6}} \\
     {M_0 \oplus H_3(\mu_1)} \ar[drr]_-*+{IV} \ar[ddd]_-*+{II} & {M_0 \oplus H_2(\mu_1) \oplus H_1(\mu_2)} \ar[dr]^-*+{IV} \ar[dddr]^-*+{II} & {M_0 \oplus H_1(\mu_1) \oplus H_1(\mu_2) \oplus H_1(\mu_3)} \ar[d]^-*+{IV}  & {\text{cod. 9}} \\
      & {M_1 \oplus 2H_1(\mu_1)} \ar[ddl]_-*+{III} \ar[ddr]_-*+{III} &  {M_0 \oplus 2M_1} \ar[d]^-*+{I} &   {\text{cod. 10}}\\
     &   & {2M_0 \oplus M_2} \ar[ddl]_-*+{III} &  {\text{cod. 12}}\\
     {M_0 \oplus H_1(\mu_1) \oplus H_2(\mu_1)} \ar[dr]_-*+{IV} \ar[ddd]_-*+{II} & {}  & {M_0 \oplus 2H_1(\mu_1) \oplus H_1(\mu_2)}  \ar[dl]^-*+{IV}  & {\text{cod. 13}} \\
     & {2M_0 \oplus M_1 \oplus H_1(\mu_1)} \ar[d]_-*+{III} \ar[dr]^-*+{III} & {}  &  {\text{cod. 15}}\\
     & {3M_0 \oplus H_2(\mu_1)} \ar[ddl]^-*+{II} \ar[ddr]_-*+{IV} & {3M_0 \oplus H_1(\mu_1) \oplus H_1(\mu_2)} \ar[dd]^-*+{IV} &  {\text{cod. 20}}\\
     {M_0 \oplus 3H_1(\mu_1)} \ar[d]_-*+{IV} & {}  & &   {\text{cod. 21}}\\
      {3M_0 \oplus 2H_1(\mu_1)} \ar[dr]_-*+{IV} & &  {4M_0 \oplus M_1} \ar[dl]^-*+{III} &   {\text{cod. 24}}\\
     & {5M_0 \oplus H_1(\mu_1)} \ar[d]_-*+{IV} & &  {\text{cod. 31}}\\
     & {7M_0} & &  {}   {\text{cod. 42}}
    }
    \end{split}$
    }}
    \end{equation*}
    \caption{Orbit stratification for $7\times 7$ skew-symmetric matrix pencil along with the codimensions. Rules from Theorem \ref{congruence} used for each structure transition are indicated near the corresponding edge.}
    \label{fig:Orbit_Ex_7}
\end{figure}
\end{example}
\section{Relation between closure hierarchy graphs of different dimensions}
In this section, we discuss a result that helps us to draw the closure hierarchy graph of size $(n+k)\times (n+k)$ from that of $n \times n$ and vice-versa. Note that the method allows us to only draw a part of the graph which can then be completed using the rules in Theorems  \ref{congruence} and \ref{co-for_go_up}. Also, note that two orbits having the same codimension in one graph might not have the same codimension in the new graph. 
\begin{theorem}\label{hack_graph}
    Let $P_1$ and $P_2$ be two $n \times n$ skew-symmetric matrix pencils. Then, two $(n+1) \times (n+1)$ skew-symmetric matrix pencils $Q_i$ are constructed from $P_i$, such that, under congruence, $Q_i$ is congruent to $P_i \oplus M_0$, for $i \in \{1,2\}$. $\orb_{P_1}^c$ covers $\orb_{P_2}^c$ if and only if $\orb_{Q_1}^c$ covers $\orb_{Q_2}^c$.
\end{theorem}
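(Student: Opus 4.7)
\textbf{Proof plan for Theorem \ref{hack_graph}.}

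The plan is to reduce everything to Theorem \ref{congruence}, whose four rules completely characterize the cover relation, and then check that appending an $M_0$ summand to both pencils neither destroys nor creates an applicability of any of these rules. Throughout I may, by the hypothesis and the congruence invariance of orbits, take $Q_i = P_i \oplus M_0$. The key structural observation is that $M_0$ is a $1\times1$ zero pencil, i.e., an $M$-block of index $0$, contributing to ${\cal R}$ and ${\cal L}$ only at position $0$ and not at all to any ${\cal J}_{\mu_i}$.

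For the forward direction, suppose $\orb_{P_1}^c$ covers $\orb_{P_2}^c$, so one of \emph{Rules 1--4} in Theorem \ref{congruence} takes $P_1$ to $P_2$; I argue that the same rule takes $Q_1$ to $Q_2$. \emph{Rule 2} only rearranges $H$-blocks, so it is unaffected by the new $M_0$. \emph{Rule 1} involves $M_j \oplus M_k$ with $1\le j\le k$, and the new $M_0$ can only enter the minimality set $\{k_u-j\}\cup\{k-j_t\}$ as a term $k-0=k$ (the condition $j\le k_u$ fails for $k_u=0$ since $j\ge1$); since $k-j\le k-1 < k$, the minimum is unchanged. \emph{Rule 3} requires $M_{j+1}$ to be the largest $M$-block and $r_{j+1}(P_1)=l_{j+1}(P_1)=1$; since $j+1\ge1$, the new $M_0$ does not affect the largest $M$-block and contributes nothing to $r_{j+1},l_{j+1}$. \emph{Rule 4} requires $x,y\ge z$ for all $M_z$ in $P_1$; adding $M_0$ only adds the vacuous constraint $x,y\ge 0$. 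In each case the rule produces exactly $Q_2 = P_2 \oplus M_0$.

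For the converse, suppose $\orb_{Q_1}^c$ covers $\orb_{Q_2}^c$, so one of \emph{Rules 1--4} takes $Q_1$ to $Q_2$; I show the added $M_0$ is a spectator and the same rule takes $P_1$ to $P_2$. In \emph{Rule 1}, the indices $j,k$ satisfy $j\ge1$, so the two blocks involved are genuine blocks of $P_1$; the minimality set is, by the same computation as above, unchanged when passing from $Q_1$ to $P_1$, and the $M_0$ summand persists into $Q_2 = P_2 \oplus M_0$. \emph{Rule 2} is internal to the $H$-part, so it descends immediately. \emph{Rule 3} may occur with $j=0$, in which case the $M_0$ on the right-hand side of $M_1 \oplus H_k(\mu) \rightsquigarrow M_0 \oplus H_{k+1}(\mu)$ is a freshly created block distinct from the appended one, and the largest-block and $r_{j+1}=l_{j+1}=1$ conditions transfer verbatim from $Q_1$ to $P_1$. \emph{Rule 4} again only consumes $H$-blocks and produces $M_x\oplus M_y$, with the appended $M_0$ untouched, and the constraints $x,y\ge z$ for all $M_z \in P_1$ are weaker than the ones for $Q_1$. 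In every case, $P_2$ is obtained from $P_1$ by the same rule, so Theorem \ref{congruence} gives that $\orb_{P_1}^c$ covers $\orb_{P_2}^c$.

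The only substantive point is therefore bookkeeping on the side conditions of Rules 1 and 3, where the index $0$ of the new block could in principle participate in the minimality set or in the $r_{j+1}=l_{j+1}=1$ count; the hypotheses $j\ge1$ and $j+1\ge1$ built into those rules are exactly what makes the new $M_0$ invisible to these conditions, which is why adding it is a graph-preserving operation. No induction, no perturbation argument, and no manipulation of the actual matrices is needed beyond the already established Theorem \ref{congruence}.
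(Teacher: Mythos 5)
Your proposal is correct and follows essentially the same route as the paper: reduce to the rule characterization of Theorem \ref{congruence} and verify that the appended $M_0$ summand neither enables nor obstructs any of \emph{Rules 1--4}, so the same rule carries $P_1\rightsquigarrow P_2$ exactly when it carries $Q_1\rightsquigarrow Q_2$. The paper's proof is just a terser version of this check (it only flags Rules 1 and 3 as the ones whose side conditions could in principle see the new block), whereas you verify the minimality and $r_{j+1}=l_{j+1}=1$ conditions explicitly in both directions; no gap.
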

\begin{proof}
    In Theorem \ref{congruence}, {\it Rule 1}: $M_{j} \oplus M_{k} \rightsquigarrow M_{j-1} \oplus M_{k+1}$, $k - j = \min\big\{\{k_1-j,k_2-j,\dots\}\cup \{k-j_1,k-j_2,\dots\}\big\}$, $j_t\le k$ and $j \le k_u $, and {\it Rule 3}: $M_{j+1} \oplus H_{k}(\mu) \rightsquigarrow M_{j} \oplus H_{k+1}(\mu)$, $j,k=0,1,2,\dots$, $\mu \in \overline{\mathbb C}$, do not effect the new summand $M_0$ in ${P_{i}}\oplus M_0$. Thus, ${P_{1}}$ $\rightsquigarrow$ ${P_{2}}$ is equivalent to ${P_{1}}$$\oplus M_0 \rightsquigarrow$ ${P_{2}}\oplus M_0$, for any {\it Rule i}, $i \in \{1,2,3,4\}$. Hence, the result follows.
\end{proof}
The above theorem can be illustrated by comparing the graphs in Figures~\ref{fig:Orbit_Ex} and ~\ref{fig:Orbit_Ex_7}. Using arguments similar to Theorem \ref{hack_graph}, analogous result for matrix pencils under the equivalence transformation can be stated. \hide{ But before that we state a result analogous to Theorem \ref{struc-equi} that allows us to navigate upwards in the orbit stratification graph for matrix pencils under equivalence.
\begin{theorem}\label{struc-equi_up}
Let $P_1$ and $P_2$ be two matrix pencils such that $\orb_{P_1}^e$ covers $\orb_{P_2}^e$ if and only if $P_2$ can be obtained from $P_1$ by applying one of six rules to the integer partition of $P_1$:
\begin{itemize}  
\item $L_{j-1} \oplus L_{k+1} \rightsquigarrow L_{j} \oplus L_{k}$, $1\le j \le k$ such that $k-j$ is the minimum of all possible differences of indices of $L$-blocks;
\item $L_{j-1}^T \oplus L_{k+1}^T \rightsquigarrow L_{j}^T \oplus L_{k}^T$, $1\le j \le k$ such that $k-j$ is the minimum of all possible differences of indices of $L^T$-blocks;
\item $L_{j} \oplus E_{k+1}(\mu) \rightsquigarrow L_{j+1} \oplus E_{k}(\mu)$, $j,k=0,1,2, \dots$ and $\mu \in \overline{\mathbb C}$, such that $k+1$ and $j$ are the sizes of largest (existing) $E$-block and $L$-block  in $P_1$, respectively, provided $j_{k+1}^{\mu_1}(P_1) = 1$;  
\item $L_{j}^T \oplus E_{k+1}(\mu) \rightsquigarrow L_{j+1}^T \oplus E_{k}(\mu)$, $j,k=0,1,2, \dots$ and $\mu \in \overline{\mathbb C}$, such that $k+1$ and $j$ are the sizes of largest (existing) $E$-block and $L$-block  in $P_1$, respectively, provided $j_{k+1}^{\mu_1}(P_1) = 1$;   
\item $E_{j}(\mu) \oplus E_{k}(\mu) \rightsquigarrow E_{j-1}(\mu) \oplus E_{k+1}(\mu)$, $1\le j \le k$ and $\mu \in \overline{\mathbb C}$ such that $k-j$ is the minimum of all possible differences of indices of $E$-blocks;
\item $L_{p} \oplus L_{q}^T \rightsquigarrow \bigoplus_{i=1}^dE_{k_i}(\mu_i)$, if $p+q+1 = \sum_{i=1}^d k_i,$ $p$ and $q$ are the sizes of the largest (existing) $L$- and $L^T$-blocks, respectively, in $P_1$ and $k_i$ is the the size of the largest (new) Jordan blocks corresponding to each distinct $\mu_i \in \overline{\mathbb C}$ in $P_2$. 
\end{itemize}
\end{theorem}
Now, we state the result that gives us not only a relation between closure hierarchy graphs of matrix pencils of different dimensions under equivalence but also helps us create atleast a part of one graph from the other. This result along with Theorem \ref{hack_graph} are useful for drawing the graphs manually for small dimensions.
\begin{theorem}
    Let $P_1$ and $P_2$ be two $m \times n$ matrix pencils. $m\times (n+1)/(m+1)\times n$ matrix pencils $Q_i$ are constructed from $P_i$ such that $KCF_{Q_i} = KCF_{P_i} \oplus L_0/KCF_{P_i} \oplus L_0^T$, for $i \in \{1,2\}$. $\orb_{P_1}^e$ covers $\orb_{P_2}^e$ if and only if $\orb_{Q_1}^e$ covers $\orb_{Q_2}^e$.
\end{theorem}
Note, under the equivalence transformation, orbit stratification graph for $(m+1)\times (n+1)$ matrix pencils can be created from the orbit stratification graph for $m\times n$ matrix pencils first by creating the graph for the case of  $(m+1)\times n$ or $m\times (n+1)$ and then extending it to the graph for $(m+1)\times (n+1)$ matrix pencils. Similarly, we can draw a part of the graph for $m \times n$ matrix pencils from that of $(m+1)\times (n+1)$ in two steps and then complete it using rules from Theorem \ref{struc-equi} and \ref{struc-equi_up}. Thus, the graph for $(m+k)\times (n+j)$ matrix pencils, under equivalence, can be drawn from that of $m \times n$ in $k+j$ steps.}
\section{Result and future work}
In this manuscript, we have presented a necessary and sufficient condition for one orbit of a skew-symmetric matrix pencil to cover the orbit of another skew-symmetric matrix pencil. The desired condition is presented in terms of structure transitions for canonical forms and coin moves. We conclude our paper with a technique to manually draw the whole or a part of the stratification graph from a given stratification graph for pencils of different dimensions. 
As part of our future research we plan to develop a characterization for closures of bundles of skew-symmetric pencils (similarly, as it is developed in \cite{DeDo22} for general matrix pencils, see also \cite{Dmyt23}) as well as to obtain a cover relation for bundles of skew-symmetric matrix pencils. Extending these results to skew-symmetric matrix polynomials and implementing all the results in the Stratigraph software \cite{Stra24} are other directions for future research.    

{\footnotesize 
\bibliographystyle{plain}
\bibliography{library}

\begin{thebibliography}{10}

\bibitem{Bole98}
D.~L. Boley.
\newblock {T}he algebraic structure of pencils and block {T}oeplitz matrices.
\newblock {\em {L}inear {A}lgebra {A}ppl.}, 279:255--279, 1998.

\bibitem{BoMe06}
S.~Bora and V.~Mehrmann.
\newblock {L}inear perturbation theory for structured matrix pencils arising in control theory.
\newblock {\em {SIAM} {J}. {M}atrix {A}nal. {A}ppl.}, 28(1):148--169, 2006.

\bibitem{BrRe01}
T.~J. Bridges and S.~Reich.
\newblock Multi-symplectic integrators: numerical schemes for {H}amiltonian {PDE}s that conserve symplecticity.
\newblock {\em Phys. Lett. A}, 284(4--5):184--193, 2001.

\bibitem{BrMe07}
T.~Br{\"u}ll and V.~Mehrmann.
\newblock {STCSSP}: {A} {FORTRAN} 77 routine to compute a structured staircase form for a (skew-)symmetric/(skew-)symmetric matrix pencil.
\newblock {P}reprint 31-2007, Institut f{\"u}r Mathematik, TU Berlin, 2007.

\bibitem{ByMX07}
R.~Byers, V.~Mehrmann, and H.~Xu.
\newblock {A} structured staircase algorithm for skew-symmetric/symmetric pencils.
\newblock {\em {E}lectron. {T}rans. {N}umer. {A}nal.}, 26, 2007.

\bibitem{DeDD20a}
F.~De~Ter\'an, A.~Dmytryshyn, and F.~M. Dopico.
\newblock Generic symmetric matrix pencils with bounded rank.
\newblock {\em J. Spectr. Theory}, 10:905--926, 2020.

\bibitem{DeDD24a}
F.~De~Ter{\'a}n, A.~Dmytryshyn, and F.~M. Dopico.
\newblock Even grade generic skew-symmetric matrix polynomials with bounded rank.
\newblock {\em {L}inear {A}lgebra {A}ppl.}, 702:218--239, 2024.

\bibitem{DeDD24}
F.~De~Ter{\'a}n, A.~Dmytryshyn, and F.~M. Dopico.
\newblock Generic eigenstructures of {H}ermitian pencils.
\newblock {\em SIAM J. Matrix Anal. Appl.}, 45(1):260--283, 2024.

\bibitem{DeDo22}
F.~De~Terán and F.~M. Dopico.
\newblock On bundles of matrix pencils under strict equivalence.
\newblock {\em {L}inear {A}lgebra {A}ppl.}, 658:1--31, 2023.

\bibitem{Dmyt16}
A.~Dmytryshyn.
\newblock {M}iniversal deformations of pairs of skew-symmetric matrices under congruence.
\newblock {\em {L}inear {A}lgebra {A}ppl.}, 506:506--534, 2016.

\bibitem{Dmyt17}
A.~Dmytryshyn.
\newblock {S}tructure preserving stratification of skew-symmetric matrix polynomials.
\newblock {\em {L}inear {A}lgebra {A}ppl.}, 532:266--286, 2017.

\bibitem{Dmyt23}
A.~Dmytryshyn.
\newblock Versal deformations: A tool of linear algebra.
\newblock {\em Electron. J. Linear Algebra}, 41:100--114, 2025.

\bibitem{DmDo18}
A.~Dmytryshyn and F.~M. Dopico.
\newblock {G}eneric skew-symmetric matrix polynomials with fixed rank and fixed odd grade.
\newblock {\em {L}inear {A}lgebra {A}ppl.}, 536:1--18, 2018.

\bibitem{DmJK13}
A.~Dmytryshyn, S.~Johansson, and B.~K{\aa}gstr{\"o}m.
\newblock {C}odimension computations of congruence orbits of matrices, symmetric and skew-symmetric matrix pencils using {M}atlab.
\newblock Technical Report {UMINF}~13.18, Department of Computing Science, {U}me{\aa} {U}niversity, {S}weden, 2013.

\bibitem{DmJK17}
A.~Dmytryshyn, S.~Johansson, and B.~K{\aa}gstr\"{o}m.
\newblock {C}anonical structure transitions of system pencils.
\newblock {\em {SIAM} {J}. {M}atrix {A}nal. {A}ppl.}, 38(4):1249--1267, 2017.

\bibitem{DJKV20}
A.~Dmytryshyn, S.~Johansson, B.~K{\aa}gstr{\"{o}}m, and P.~Van~Dooren.
\newblock {G}eometry of matrix polynomial spaces.
\newblock {\em Found. Comput. Math.}, 20:423--450, 2020.

\bibitem{DmKa14}
A.~Dmytryshyn and B.~K{\aa}gstr\"{o}m.
\newblock {O}rbit closure hierarchies of skew-symmetric matrix pencils.
\newblock {\em {SIAM} {J}. {M}atrix {A}nal. {A}ppl.}, 35(4):1429--1443, 2014.

\bibitem{DmKS13}
A.~Dmytryshyn, B.~K{\aa}gstr\"{o}m, and V.V. Sergeichuk.
\newblock {S}kew-symmetric matrix pencils: {C}odimension counts and the solution of a pair of matrix equations.
\newblock {\em {L}inear {A}lgebra {A}ppl.}, 438(8):3375--3396, 2013.

\bibitem{DoPV19}
F.~Dopico, J.~P{\'e}rez, and P.~Van~Dooren.
\newblock Structured backward error analysis of linearized structured polynomial eigenvalue problems.
\newblock {\em {M}ath. {C}omp.}, 88(317):1189--1228, 2019.

\bibitem{EdEK99}
A.~Edelman, E.~Elmroth, and B.~K{\aa}gstr\"{o}m.
\newblock {A} geometric approach to perturbation theory of matrices and matrix pencils. {P}art {II}: {A} stratification-enhanced staircase algorithm.
\newblock {\em {SIAM} {J}. {M}atrix {A}nal. {A}ppl.}, 20(3):667--669, 1999.

\bibitem{ElJK09}
E.~Elmroth, S.~Johansson, and B.~K{\aa}gstr\"{o}m.
\newblock {S}tratification of controllability and observability pairs --- {T}heory and use in applications.
\newblock {\em {SIAM} {J}. {M}atrix {A}nal. {A}ppl.}, 31(2):203--226, 2009.

\bibitem{FeGP00}
J.~Ferrer, M.I. Garc\'{i}a, and F.~Puerta.
\newblock {R}egularity of the {B}runovsky-{K}ronecker stratification.
\newblock {\em {SIAM} {J}. {A}ppl. {M}ath.}, 21(3):724--742, 2000.

\bibitem{Gant59}
F.~R. Gantmacher.
\newblock {\em {T}he theory of matrices, {V}ol. {I} and {II} (transl.)}.
\newblock Chelsea, {N}ew {Y}ork, 1959.

\bibitem{Garc98}
M.I. Garc\'{i}a-Planas.
\newblock {K}ronecker stratification of the space of quadruples of matrices.
\newblock {\em {SIAM} {J}. {M}atrix {A}nal. {A}ppl.}, 19(4):872--885, 1998.

\bibitem{KaPK}
A.~D. Karageorgos, A.~A. Pantelous, and G.~I. Kalogeropoulos.
\newblock Investigating the solution properties of symmetric/skew-symmetric {L}{T}{I} homogeneous matrix descriptor discrete-time systems with consistent initial conditions and constant delay period.
\newblock In {\em IEEE ICCA 2010}, pages 1641--1647, 2010.

\bibitem{LiHo99}
P.~Y. Li and R.~Horowitz.
\newblock Passive velocity field control of mechanical manipulators.
\newblock {\em IEEE Trans. Robotics Automation}, 15(4):751--763, 1999.

\bibitem{MMMM06b}
D.~S. Mackey, N.~Mackey, C.~Mehl, and V.~Mehrmann.
\newblock {S}tructured polynomial eigenvalue problems: {G}ood vibrations from good linearizations.
\newblock {\em {SIAM} {J}. {M}atrix {A}nal. {A}ppl.}, 28(4):1029--1051, 2006.

\bibitem{Olve91}
P.~J. Olver.
\newblock {C}anonical {F}orms for {C}ompatible {B}i{H}amiltonian {S}ystems.
\newblock In Ioannis Antoniou and Franklin~J. Lambert, editors, {\em Solitons and Chaos}, pages 171--179. Springer Berlin Heidelberg, Berlin, Heidelberg, 1991.

\bibitem{Pokr86}
A.~Pokrzywa.
\newblock {O}n perturbations and the equivalence orbit of a matrix pencil.
\newblock {\em {L}inear {A}lgebra {A}ppl.}, 82:99--121, 1986.

\bibitem{Rodm07}
L.~Rodman.
\newblock {C}omparison of congruences and strict equivalences for real, complex, and quaternionic matrix pencils with symmetries.
\newblock {\em {E}lectron. {J}. {L}inear {A}lgebra}, 16:248--283, 2007.

\bibitem{Stra24}
StratiGraph and MCS Toolbox.
\newblock https://www.umu.se/forskning/projekt/stratigraph-and-mcs-toolbox/.
\newblock Accessed: 2024-08-20.

\bibitem{Thom91}
R.~C. Thompson.
\newblock {P}encils of complex and real symmetric and skew matrices.
\newblock {\em {L}inear {A}lgebra {A}ppl.}, 147:323--371, 1991.

\end{thebibliography}
}

\end{document}